\newlength{\hchng}
\newlength{\vchng}
\def \osc {\mathrm{osc}}
\newcommand{\defeq}{\mathrel{\mathop:}=}
\newtheorem{theorem}{Theorem}[section]
\newtheorem{lemma}[theorem]{Lemma}
\newtheorem{proposition}[theorem]{Proposition}
\newtheorem{corollary}[theorem]{Corollary}
\theoremstyle{definition}
\theoremstyle{remark}
\newtheorem{remark}[theorem]{Remark}
\numberwithin{equation}{section}
\newcommand{\intav}[1]{\mathchoice {\mathop{\vrule width 6pt height 3 pt depth  -2.5pt
\kern -8pt \intop}\nolimits_{\kern -6pt#1}} {\mathop{\vrule width
5pt height 3  pt depth -2.6pt \kern -6pt \intop}\nolimits_{#1}}
{\mathop{\vrule width 5pt height 3 pt depth -2.6pt \kern -6pt
\intop}\nolimits_{#1}} {\mathop{\vrule width 5pt height 3 pt depth
-2.6pt \kern -6pt \intop}\nolimits_{#1}}}
\begin{document}

\begin{frontmatter}

\title{Optimal $C^{1,\alpha}$ regularity  for degenerate fully nonlinear elliptic equations with Neumann boundary condition}

\author{Gleydson C. Ricarte\footnote{Universidade Federal do Cear\'a, Department of Mathematics, Fortaleza, CE-Brazil 60455-760, \texttt{ricarte@mat.ufc.br} }  }
\medskip


\begin{abstract}
In the present paper, we study sharp $C^{1,\alpha}$ regularity results with boundary Neumann condition for viscosity solutions for a class of degenerate fully non-linear elliptic equations.

\end{abstract}

\begin{keyword}
    Regularity theory, optimal {\it a priori} estimates, fully nonlinear elliptic equations.\\
    \vspace{0.5 cm}
    \noindent \textbf{AMS Subject Classifications:} 35J60, 35J75, 35B65, 35R35.

\end{keyword}

\end{frontmatter}

\section{Introduction}
The quest for obtaining sharp, optimal regularity results is one of the most exciting current trends in the study of nonlinear pdes. In this paper, we study sharp $C^{1,\alpha}$ regularity estimates of viscosity solutions for
\begin{equation} \label{E1}
	|\nabla u|^{\gamma} F(D^2 u) =f,
\end{equation}
with Neumann boundary condition $\nabla u \cdot \nu = g$ when $\Omega \subset \mathbb{R}^n$ is a bounded domain with  $ \Omega \in C^{2}$, $\nu$ denotes the inner normal of $\Omega$,  $f$ is a function defined in $\Omega$, $g$ is a function defined on $\partial \Omega$ and $\gamma>0$, $F$ is uniformly elliptic, $F(0)=0$.  Notice that solutions $u$ of \eqref{E1} cannot be more regular than $C^{1,\alpha}$. More precisely for $0 < \alpha <1$, the function 
$$
	u(x) = |x|^{1+\alpha}
$$
(as mentioned in \cite{ART2,Silv1}) satisfies 
$$
	|\nabla u|^{\gamma} \Delta u =C |x|^{(1+\alpha)(\gamma+1)-(\gamma +2)}
$$
where $C=(1+\alpha)^{1+\gamma}(n+\alpha-1)$.  The HRS is in $L^{\infty}$, if we choose $\alpha=\frac{1}{1+\gamma}$.  This example shows that the best regularity that one can expect for solutions to \eqref{E1} is $C^{1,\frac{1}{1+\gamma}}$.  The specificity of these equations is that they  are not uniformly elliptic; they are either singular or degenerate (in a way to be made precise).  {\it Singular/degenerate fully non-linear elliptic equations } of the type \eqref{E1} makes part of a class of non-linear elliptic equations studied in a series of papers by Birindelli and Demengel, starting with \cite{BD} (for singular case) and \cite{BD1,BD2,BD3} for the degenerate case.    Interior regularity properties of viscosity solutions of \eqref{E1} have been studied since a long time, starting with the seminal paper of C. Imbert and L. Silvestre  \cite{Silv1},  which contains interior $C^{1,\alpha}$ estimates for \eqref{E1} when $f \in L^{\infty}(B_1)$. Very recently, optimal regularity was proved in \cite{ART2}, with the optimal H\"{o}lder's coefficient $\alpha = \frac{1}{1+\gamma}$.  

For Dirichlet boundary data, the regularity up to the boundary is fairly well understood. To know,  Birindelli and Demengel \cite{BD4} proved $C^{1,\alpha}(\overline{\Omega})$ estimates in the presence of a regular boundary datum.  However, for the Neumann problem, there are still not many results. Regularity properties of viscosity solutions of fully nonlinear elliptic equations $F(D^2 u)=f$ with Neumann boundary condition have been studied the seminal paper Milakis, E and Silvestre L. \cite{Silv},  which contains in particular $C^{1,\alpha}$  estimates  viscosity solutions of 
$$
\left\{
\begin{array}{rcl}
F(D^2u) &=& f  \quad \mbox{in} \, \Omega\\
 \nabla u \cdot \nu &=& g  \quad \mbox{in} \,\,  \partial \Omega,
\end{array}
\right.
$$
when $f \in L^p(\Omega)$, $p>n$ and $g \in C^{\beta}(\partial \Omega)$.  Inspired by that breakthrough, we wanted to complete the work, showing that a similar result holds at the flat boundary (see Theorems \ref{P1}). 

Very recently, the $C^{1,\alpha}$  estimate for degenerate fully nonlinear equations of the type 
\begin{equation} \label{BB1}
\left\{
\begin{array}{rcl}
|\nabla u|^{\gamma}F(D^2u) &=& f  \quad \mbox{in} \, \Omega\\
 \nabla u \cdot \nu &=& g  \quad \mbox{in} \,\,  \partial \Omega,
\end{array}
\right.
\end{equation}
has been solved in \cite{BB} (see too \cite{Patrizi}).  In this paper we will develop the optimal regularity theory for \eqref{BB1}. Precisely, we will apply the technique presented in \cite{ART2} to prove that viscosity solution to \eqref{BB1} are $C^{1,\alpha}$, where $\alpha = \textrm{min} \{\alpha^{-}_0, \beta, \frac{1}{1+\gamma}\}$ and $\alpha_0 \in (0,1)$ is the optimal exponent of regularity theory for homogeneous equations (see \eqref{Hol1}).

The paper is organized as follows. In Section \ref{SC0} we specify the notation to be used in the paper and main results. In Section \ref{SC2} we will prove $C^{1,\alpha}$ regularity.  Getting $C^{1,\alpha}$ estimates consists in proving that the graph of the function $u$ can be approximated by planes with an error bounded by $Cr^{1+\alpha}$ in semi-balls of radius $r$.  The proof is based on an iterative argument, in which we show that the graph of $u$ gets flatter in smaller semi-balls.  When we consider the problem \eqref{E1} the principal difficulty lies in the following fact: if $\ell(x)=a+\vec p \cdot x$ is an affine function and $u$ is a viscosity solution for the problem \eqref{E1}, we can not conclude that $u + \ell$ is a viscosity solution for the problem \eqref{E1}.  In \cite{Silv}, this fact is important because it allows us to apply regularity theory for $v=u + \ell$ which is crucial to reach a {\it improvement of flatness} for the problem
\begin{equation}
	|\nabla u + \vec q|^{\gamma} F(D^2 u)=f \quad \textrm{in} \quad B_1, \label{q}
\end{equation}
where $\vec q \in \mathbb{R}^n$.  In Sect. \ref{Four} we provide a few implications the sharp estimates from Sect. 3 have
towards the solvability of some well known open problems in the elliptic regularity theory

\section{Notations and the statement of the main result}\label{SC0}

In this section, we will present notations and main assumptions which we will work throughout this article. Furthermore, we will also collect some preliminary results for future references. For the reader's convenience we recall the definition of viscosity solutions of fully nonlinear elliptic equations and provide a brief collection of basic results related to this notion. We always assume that $F$ is $(\lambda,\Lambda)$-elliptic, i.e., there exists constants $0 \le \lambda \le \Lambda < \infty$ such that
$$
	\lambda \|N\| \le F(M+N,x)-F(M,x) \le \Lambda \|N\|
$$
holds for $M,N \in \textrm{Sym}(n)$, $N \ge 0$ and $x \in \Omega$, where $\|M\| \colon= \sup_{|x|=1} |Mx|$. Also $\nabla u$ will denote the total gradient of $u$.   We will introduce the well-known \textit{Pucci's extremal operators}: Let $0 < \lambda \le \Lambda$ be given constants. For $M \in \mathcal{S}(n)$ we define:
\begin{equation}
 	\mathscr{P}^{+}_{\lambda,\Lambda}(M) \defeq \Lambda \sum_{e_i >0} e_i + \lambda \sum_{e_i <0} e_i \quad \text{and} \quad \mathscr{P}^{+}_{\lambda,\Lambda}(M) \defeq \lambda \sum_{e_i >0} e_i + \Lambda \sum_{e_i <0} e_i,
 \end{equation}
 where $e_i = e_i(M)$ denote the eigenvalues of $M$. We say that $u$ is  a viscosity subsolution (supersolution) of \eqref{E1} if for any $\varphi \in C^2(\Omega \cup \partial \Omega)$ touching $u$ by above (below) at $x_0 $ in $\Omega \cup \partial \Omega$, we have that
$$
	|\nabla \varphi(x_0)|^{\gamma} F(D^2 \varphi(x_0)) \ge \,\, (\le) \, f(x_0) \quad \textrm{if} \quad x_0 \in \Omega
$$
and
$$
	\nabla \varphi(x_0) \cdot \nu   \ge \,\, (\le) \,\, g(x_0) \quad \textrm{if} \quad x_0 \in \partial \Omega,
$$
 If $u$ is both subsolution and supersolution, we call it a viscosity solution.  We will denote by $C^{0,\alpha}(x_0)$ (or $C^{1,\alpha}(X_0)$) the class of functions $v$ which are H\"{o}lder continuous (or which have H\"{o}lder continuous first order derivatives with H\'{o}lder exponent $\alpha \in (0,1)$). By $\|\cdot\|_{C^{\alpha}}$ we mean the maximum of $L^{\infty}$-norm and the $\alpha$-H\"{o}lder semi-norm. Also by $\|\cdot\|_{C^{1,\alpha}}$ we mean the maximum of $L^{\infty}$-norm and the $\|\cdot\|_{C^{\alpha}}$-norm of the gradient.


 Now, we state our main result.




\begin{theorem}[$C^{1,\alpha}$ regularity]\label{P1}
Let $\Omega \in C^2$, $0 \in \partial \Omega$ and  $u$ be a viscosity solution to 
 \begin{equation} \label{Flat-1}
\left\{
\begin{array}{rcl}
| \nabla u|^{\gamma} F(D^2u) &=& f  \quad \mbox{in} \, \Omega \cap B_1(0)\\
 \nabla u \cdot \nu &=& g  \quad \mbox{in} \,\, \partial \Omega \cap B_1(0)
\end{array}
\right.
\end{equation}
where $f \in C(\overline{\Omega})$ and $g \in C^{\beta}(\partial \Omega)$ for some $\beta \in (0,1)$. Then  $u \in C^{1,\alpha}(\overline{\Omega \cap B_{1/2}(0)})$ where $\alpha = \textrm{min} \left\{\alpha^{-}_0, \beta, \frac{1}{1+\gamma}\right\}$ and $\alpha_0$  is the optimal H\"{o}lder exponent for solutions to constant coefficient, homogeneous equation . Moreover, we have the estimate
\begin{equation} \label{flatq}
	\|u\|_{C^{1,\alpha}(\overline{\Omega \cap B_{1/2}})} \le C \left(\|u\|_{C(\overline{\Omega})} + \|g\|_{C^{\beta}(\partial \Omega)} + \|f\|_{L^{\infty}(\overline{\Omega})}\right)
\end{equation}
for a universal constant $C$ 
\end{theorem}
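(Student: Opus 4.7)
The strategy follows the now-standard programme of Imbert--Silvestre/Ara\'ujo--Teixeira adapted to the boundary setting of Milakis--Silvestre. First I would localize and flatten: using the $C^2$ regularity of $\partial\Omega$ near $0$, a $C^2$ diffeomorphism straightens the boundary into $\{x_n=0\}\cap B_1$, mapping the inner normal to $e_n$ up to an error of size $O(|x|)$. After rescaling (dividing $u$ by $\|u\|_\infty+\|f\|_\infty^{1/(1+\gamma)}+\|g\|_{C^\beta}$) we may assume $\|u\|_\infty\le 1$ and that $\|f\|_\infty$ and $\|g\|_{C^\beta}$ are as small as needed, at the cost of replacing $F$ by an equivalent $(\lambda,\Lambda)$-elliptic operator and $g$ by a function with the same $C^\beta$-seminorm on $\{x_n=0\}$.

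The core ingredient is an \emph{approximation lemma} at the flat boundary: for every $\delta>0$, if $\|f\|_\infty$ and $\|g\|_{C^\beta}$ are sufficiently small, then a viscosity solution of the perturbed equation $|\nabla u+\vec q|^\gamma F(D^2u)=f$ in $B_1^+$ with $\partial_{x_n}u=g$ on $\{x_n=0\}\cap B_1$ is $\delta$-close in $L^\infty(B_{3/4}^+)$ to a solution $h$ of the homogeneous problem $F(D^2h)=0$ in $B_{3/4}^+$ with $\partial_{x_n}h=0$ on the flat face. This is proved by contradiction and compactness, splitting into two regimes as in \cite{ART2,BB}: when $|\vec q|$ is bounded below, the equation is uniformly elliptic in the usual viscosity sense (with ellipticity constants depending on $|\vec q|^\gamma$) and one invokes the stability/compactness of the Milakis--Silvestre \cite{Silv} framework; when $|\vec q|$ is small, $|\nabla u+\vec q|^\gamma$ is driven to zero and the equation degenerates to $F(D^2h)=0$ with a vanishing Neumann datum, where again one uses standard viscosity stability together with the fact that limiting solutions are known to satisfy $C^{1,\alpha_0}(\overline{B_{1/2}^+})$ estimates.

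The $C^{1,\alpha_0}$ boundary regularity of $h$ gives an affine function $\ell(x)=a+\vec p\cdot x$ with $\vec p\cdot e_n=0$ such that $\sup_{B_r^+}|h-\ell|\lesssim r^{1+\alpha_0}$. Transferring this to $u$ via the approximation lemma and choosing $r$, $\delta$ carefully yields the first step of an \emph{improvement-of-flatness} scheme: there exists an affine $\ell_1$ with $\nabla\ell_1\cdot e_n=g(0)$ and $\sup_{B_r^+}|u-\ell_1|\le r^{1+\alpha}$. The iteration is then driven by the rescaling $u_k(x)=r^{-k(1+\alpha)}(u-\ell_k)(r^kx)$, which absorbs the accumulated gradient $\vec q_k=\sum\nabla\ell_j$ into the equation as a parameter; the scaling is consistent precisely when $\alpha\le 1/(1+\gamma)$, and the compatibility with the Neumann condition forces $\alpha\le\beta$, producing the announced exponent $\alpha=\min\{\alpha_0^-,\beta,1/(1+\gamma)\}$. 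Summing the telescoping series of affine correctors yields a limiting gradient $\nabla u(0)$ together with the pointwise $C^{1,\alpha}$ estimate at $0$, and since $0$ can be translated along $\partial\Omega\cap B_{1/2}$ and combined with the known interior estimates of \cite{ART2}, the global bound \eqref{flatq} follows.

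The main obstacle is controlling the drifted equation $|\nabla u_k+\vec q_k|^\gamma F(D^2u_k)=f_k$ uniformly in $k$: the iteration must keep $|\vec q_k|$ bounded so that the approximation lemma applies at every scale, and the dichotomy between the degenerate and uniformly elliptic regimes must be performed at each step without deteriorating constants. The second delicate point is ensuring that the affine corrector at each step respects the Neumann condition to leading order despite the $C^\beta$ oscillation of $g$; this is where the exponent $\beta$ enters the minimum, and it is handled by choosing $\nabla\ell_k\cdot e_n$ to match $g$ at the base point and absorbing the residual into $f_k$ with a controlled $L^\infty$ bound that survives the geometric rescaling.
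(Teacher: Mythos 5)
Your outline follows the same architecture as the paper (flattening, normalization, an approximation lemma by compactness, a one-step improvement of flatness via the $C^{1,\alpha_0}$ theory for $F(D^2h)=0$ with homogeneous Neumann data, geometric iteration with affine correctors matching $g$ at the base point, and the exponent restrictions $\alpha\le\frac{1}{1+\gamma}$, $\alpha\le\beta$), but the way you handle the drift vector $\vec q$ contains two genuine gaps. In the compactness step, both branches of your dichotomy are misstated. Having $|\vec q|$ bounded below does not make $|\nabla u+\vec q|^\gamma F(D^2u)$ uniformly elliptic, since $\nabla u+\vec q$ may still vanish; what is actually available, and what the paper uses (quoting \cite{BB}), is a Lipschitz estimate via an Ishii--Lions argument when $|\vec q|$ is large, together with a H\"older estimate, uniform for $|\vec q|$ bounded, coming from the ABP/regularity theory at points where the gradient is large. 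Conversely, when $|\vec q|$ is small the factor $|\nabla u+\vec q|^\gamma$ is not ``driven to zero'': after passing to the limit one only gets $|\nabla w_\infty+\vec q_\infty|^\gamma\,\mathfrak{F}_\infty(D^2w_\infty)=0$, and deducing $\mathfrak{F}_\infty(D^2w_\infty)=0$ from this is exactly the nontrivial cancellation statement (the paper's Lemma \ref{Canc}, resting on Lemma 6 of \cite{Silv1}, checked also at Neumann touching points), which your argument never supplies; where the gradient factor degenerates the equation gives no information, which is the whole difficulty.

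The second gap is your plan to ``keep $|\vec q_k|$ bounded so that the approximation lemma applies at every scale.'' This is not something you can arrange: in the iteration the drift is $\vec q_j=\rho_0^{-\alpha j}\vec b_j$, which blows up whenever $\nabla u(0)\neq 0$, and it is dictated by the solution, not chosen. The paper resolves this by proving the approximation lemma and the one-step improvement (Lemmas \ref{PT1} and \ref{L1}) for \emph{arbitrary} $\vec q\in\mathbb{R}^n$ with constants independent of $\vec q$ --- the unbounded regime being handled by dividing the equation by $|\vec q_k|^\gamma$ and passing to the limit --- and this uniformity in $\vec q$ is precisely what allows Lemma \ref{L2} to be iterated at every scale. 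Two smaller points: a generic $C^2$ flattening that sends the inner normal to $e_n$ only up to an $O(|x|)$ error produces an oblique, not Neumann, condition; the paper flattens with distance-function coordinates precisely to keep the Neumann condition exact. Also, a residual Neumann datum cannot be ``absorbed into $f_k$''; in the paper one first subtracts $g(0)x_n$ so that the rescaled boundary datum $\tilde g(x)=\rho_0^{-\alpha j}g(\rho_0^jx)$ is controlled by $\rho_0^{(\beta-\alpha)j}\|g\|_{C^\beta}$, which is where $\alpha\le\beta$ enters.
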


\subsection{Reduction to flat boundary conditions:} Since $\Omega$ is $C^2$, we can flatten the boundary using coordinates which employs the distance function to the boundary $\partial \Omega$. See for instance Lemma 14.16 in \cite{007} or the Appendix in \cite{008}. We crucially note that such coordinates preserve the Neumann boundary conditions unlike standard flattening which changes Neumann conditions to oblique derivative conditions in general. Consequently, without loss of generality, we may consider the following flat boundary value problem 
\begin{equation} \label{Neura}
 \left\{
\begin{array}{rcl}
 \langle \mathcal{A}(x) \nabla u, \nabla u\rangle^{\gamma/2}G(x,   \nabla u,D^2u) &=& f,  \quad \mbox{in} \, \,\,\, B^+_1\\
 \partial_n u &=& g  \quad \,\, \, \, \mbox{in} \,\, \Upsilon.
\end{array}
\right.
 \end{equation}
where $B^+_{r}=B_r(0) \cap \{x_n>0\}$ and $\Upsilon = B_r(0) \cap \{x_n=0\}$, $\mathcal{A}$ is a uniformly elliptic positive definite matrix with Lipschitz coefficients, $G: \textrm{Sym}(n) \times \mathbb{R}^n \times \mathbb{R}^n \rightarrow \mathbb{R}$ satisfies the following structural conditions:
\begin{enumerate}
\item[C1)] $F$ is degenerate elliptic, that is, for all $M,N \in \textrm{Sym}(n)$, $N \ge 0$ and $(p,x) \in \mathbb{R}^n \times \mathbb{R}^n$,
$$
	G(M+N,p,x) \ge G(M,p,x).
$$
\item[C2)] $G(0,p,x) =0$ for all $(p,x) \in \mathbb{R}^n \times \mathbb{R}^n$.
\item[C3)] $G$ is uniformly elliptic is a small neighborhood of $0$, i.e., there is a $\mu >0$ such that
$$
	\lambda \|N\| \le G(M+N,p,x) - G(M,p,x) \le \Lambda \|N\|,
$$
for some $0 < \lambda \le \Lambda$, $p \in B_{\mu}$, $M,N \in \textrm{Sym}(n)$ and $N \ge 0$ and $x \in \mathbb{R}^n$.
\end{enumerate} 
 Thus, the techniques in this paper can be modified to yield $C^{1,\alpha}$ regularity results for Neumann boundary problems of the type \eqref{Neura}.  Thus, for simplicity and clarity of the arguments we focus on the specific form $|\nabla u|^{\gamma}F(D^2 u)=f$ in $B^+_1$ with $\partial_n u =g$ in $\Upsilon$.

\subsection{Some important lemmas}

We introduce the following result.

 \begin{lemma}[See \cite{BB}]\label{LLL2}
 Let $\vec q \in \mathbb{R}^n$. Assume $w$ solves  
\begin{equation} \label{Plus}
	 |\nabla w + \vec q|^{\gamma}F(D^2u)  = f \quad \textrm{in} \quad B^+_1
\end{equation}
 with $|w| \le 1$, $\|f\|_{L^{\infty}(B^+_1)} \le 1$ and with Neumann boundary condition $ w_{x_n}  = g$ in $\Upsilon$ satisfying $\|g\|_{B^{\beta}(\Upsilon)} \le 1$. Then $w$ is equicontinuous up to boundary.
 \end{lemma}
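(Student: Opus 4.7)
The plan is to establish equicontinuity of $w$ up to $\Upsilon$ by the doubling-of-variables (Ishii--Lions) method adapted to degenerate fully nonlinear equations with flat Neumann boundary data, following the strategy of \cite{BB}. The argument proceeds by a dichotomy based on the size of $|\vec q|$: when $|\vec q|$ exceeds a universal threshold $M_0$, the equation becomes essentially uniformly elliptic along orbits where $|\nabla w + \vec q| \sim |\vec q|$, so dividing by $|\vec q|^\gamma$ reduces us to the setting of Milakis--Silvestre $C^\beta$ estimates at a flat Neumann boundary; when $|\vec q|\le M_0$, the degeneracy must be handled directly.

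For the degenerate regime, I would fix a base point $x_0 \in \overline{B^+_{1/2}}$ and introduce, for parameters $L,\tau,\sigma$ to be tuned, the auxiliary function
\[
\Phi(x,y) \defeq w(x) - w(y) - L|x-y|^\tau - \sigma\bigl(|x-x_0|^2 + |y-x_0|^2\bigr) - \Psi_g(x,y)
\]
on $\overline{B^+_{3/4}} \times \overline{B^+_{3/4}}$, where $\Psi_g$ is a corrector linear in the normal coordinates $x_n, y_n$ that absorbs the Neumann data. The flatness of $\Upsilon$ is essential here because it allows $\Psi_g$ to be taken affine in $x_n - y_n$ without producing any second-order perturbation. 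The aim is to prove $\Phi\le 0$ for $L$ large enough and then send $\sigma\to 0$ to extract a uniform modulus for $w$ on $B^+_{1/2}$.

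Assuming by contradiction that $\Phi$ has a positive maximum at $(\bar x,\bar y)$ with $\bar x\ne\bar y$, the construction of $\Psi_g$ rules out activation of the Neumann viscosity inequalities at $\bar x$ or $\bar y$, so Ishii's lemma yields a common vector $p\sim L\tau|\bar x-\bar y|^{\tau-1}$ and matrices $X,Y$ with $X-Y$ enjoying a strong negative trace bound. The sub- and supersolution inequalities then read
\[
|p+\vec q|^\gamma F(X) \ge f(\bar x), \qquad |p+\vec q|^\gamma F(Y) \le f(\bar y).
\]
A second sub-dichotomy closes the argument: if $|p+\vec q|$ is bounded below, uniform ellipticity of $F$ combined with the Pucci bound $F(X)-F(Y)\le\mathscr{P}^+_{\lambda,\Lambda}(X-Y)$ and large $L$ produces a contradiction; if $|p+\vec q|$ is small, then $|p|\le 2M_0$, forcing $|\bar x-\bar y|$ away from zero, after which the same Pucci bound again yields the contradiction once $\tau$ is chosen close to $1$ and $L$ is taken larger than $\|f\|_{L^\infty}$ and $\|g\|_{C^\beta}$.

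The principal obstacle is precisely the possible vanishing of the factor $|p+\vec q|^\gamma$, which prevents the direct absorption of the Pucci term into ellipticity that drives the classical Ishii--Lions argument. The dichotomy above uses the explicit relation between $p$, $\vec q$, and the exponent $\tau$ to force one of the two regimes to yield the contradiction, while flatness of $\Upsilon$ keeps $\Psi_g$ from corrupting the Hessian estimates. Merging the moduli from the two main regimes $|\vec q|>M_0$ and $|\vec q|\le M_0$ gives an equicontinuity bound uniform in $\vec q$.
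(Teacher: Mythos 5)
Your dichotomy is the reverse of the one that actually works, and both halves have gaps. For the large-slope regime, the claim that for $|\vec q|\ge M_0$ one can ``divide by $|\vec q|^\gamma$'' and land in the uniformly elliptic Neumann theory of \cite{Silv} is circular: the degenerate factor is $|\nabla w+\vec q|^\gamma$, not $|\vec q|^\gamma$, and nothing prevents the (viscosity, test-function) gradient from being comparable to $-\vec q$, where the equation degenerates no matter how large $|\vec q|$ is. The comparability $|\nabla w+\vec q|\sim|\vec q|$ is exactly what has to be proved, and in \cite{BB} (Lemma 5.1, the result the paper quotes) it is obtained \emph{by} the Ishii--Lions doubling argument: there the test-function gradient $p$ carries a universal bound and the hypothesis $|\vec q|$ large relative to that bound forces $|p+\vec q|\gtrsim|\vec q|$, which is what makes the doubling inequalities uniformly elliptic and yields a Lipschitz bound. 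One also cannot rescue your large-$|\vec q|$ half by the cutoff trick (Pucci inequalities where $|\nabla w|\ge 2(|\vec q|+1)$), because the resulting H\"older estimate depends on the gradient threshold and hence blows up with $|\vec q|$, destroying the uniformity in $\vec q$ that the compactness argument needs.

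For the small-slope regime your Ishii--Lions scheme does not close. In the branch where $|p+\vec q|$ is small, the viscosity inequalities only give $F(X)-F(Y)\ge\bigl(f(\bar x)-f(\bar y)\bigr)/|p+\vec q|^\gamma$, whose right-hand side is \emph{not} bounded below by $-2\|f\|_{L^\infty}$: dividing by a vanishing factor makes it arbitrarily negative, so no choice of $L>\|f\|_{L^\infty}$ produces a contradiction with the Pucci bound (this branch is harmless only if $f\equiv0$). Moreover, even the nondegenerate branch fails with the power modulus $L|x-y|^\tau$: the Hessian of $|z|^\tau$ has eigenvalues $\tau(\tau-1)|z|^{\tau-2}$ and $\tau|z|^{\tau-2}$, so $\mathscr{P}^{+}_{\lambda,\Lambda}$ of the doubling Hessian is $\le 0$ only if $\lambda(1-\tau)>(n-1)\Lambda$, which generically fails; since both the good and bad terms scale linearly in $L$, ``taking $L$ large'' does not help. (If it did, one would obtain Krylov--Safonov-type H\"older estimates by a two-line comparison argument.) This is precisely why the paper, following \cite{BB} and \cite{Silv1}, runs the dichotomy the other way: for $|\vec q|$ bounded it writes the equation as one that is uniformly elliptic only where the gradient is large ($|p|\ge 2$ gives $|p+\vec q|\ge1$, hence Pucci inequalities with right-hand side $|f|$) and invokes the ABP/H\"older theory with Neumann data for such equations (the boundary analogue, \`a la \cite{Silv} and \cite{IMB}, developed in the appendix), while for $|\vec q|$ large it invokes the Ishii--Lions Lipschitz estimate of \cite{BB} with a Lipschitz-scale concave modulus, not a H\"older power. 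You would need to restructure your proof along these lines for either half to go through.
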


We next use the available compactness to derive a mechanism linking solutions of the inhomogeneous pde and solutions of the homogeneous equation
\begin{corollary}\label{A1}
 Let $(\vec q_k)$ is a sequence in $\mathbb{R}^n$ and  $(f_k)_{k},(g_k)_k$ are sequences of continuous and uniformly bounded functions and $(u_k)_k$ is a sequence of uniformly bounded viscosity solutions of,
$$
\left\{
\begin{array}{rcl}
|\nabla u_k + \vec q_k|^{\gamma}F(D^2 u_k) &=& f_k(x),  \quad \mbox{in} \, B^+_{1},\\
\partial_n u_k  &=& g_k  \quad \quad \,\,\,\, \mbox{in} \,\, \Upsilon
\end{array}
\right.
$$
Then the sequence $(u_k)_k$ is relatively compact in $C(\overline{B^+_1})$. In particular, if $u_k \to u_{\infty}$, $\vec q_k \to \vec q_{\infty}$ and $f_k \to f_{\infty}$ and $g_k \to g_{\infty}$, then $u_{\infty}$ is a viscosity solution to 
$$
\left\{
\begin{array}{rcl}
|\nabla u_{\infty} + \vec q_{\infty}|^{\gamma}F(D^2 u_{\infty}) &=& f_{\infty}(x),  \quad \mbox{in} \, B^+_{1},\\
\partial_n u_{\infty} \cdot \nu &=& g_{\infty}  \quad \quad \,\,\,\, \mbox{in} \,\, \Upsilon
\end{array}
\right.
$$
\end{corollary}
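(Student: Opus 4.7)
The plan is to combine Lemma \ref{LLL2} with Arzel\`a--Ascoli for the compactness claim, and then run a standard half-relaxed limits argument to identify the uniform limit as a viscosity solution of the limiting problem.

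\textbf{Compactness.} Setting $K := 1 + \sup_k \|u_k\|_{L^\infty(B^+_1)} + \sup_k \|f_k\|_{L^\infty(B^+_1)} + \sup_k \|g_k\|_{C^\beta(\Upsilon)}$, which is finite by hypothesis, the rescaled functions $\tilde u_k := u_k/K$ solve Neumann problems of the same structural form as \eqref{Plus}, with unit bounds $|\tilde u_k|\le 1$, $\|\tilde f_k\|_\infty \le 1$, $\|\tilde g_k\|_{C^\beta(\Upsilon)}\le 1$, rescaled shifts $\vec q_k/K$, and an operator $\tilde F$ that retains $(\lambda,\Lambda)$-ellipticity with $\tilde F(0)=0$. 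Lemma \ref{LLL2} then provides a single modulus of continuity $\omega$, depending only on universal parameters, such that $|\tilde u_k(x) - \tilde u_k(y)| \le \omega(|x-y|)$ uniformly in $k$. Undoing the rescaling, $(u_k)$ is equicontinuous and uniformly bounded on the compact set $\overline{B^+_1}$, so Arzel\`a--Ascoli yields a uniformly convergent subsequence with limit $u_\infty \in C(\overline{B^+_1})$.

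\textbf{Identification of the limit.} Assuming the additional convergences $u_k \to u_\infty$, $\vec q_k \to \vec q_\infty$, $f_k \to f_\infty$, $g_k \to g_\infty$, I would verify the subsolution inequality for $u_\infty$ at an arbitrary touching point (the supersolution case being symmetric) by the usual test-function argument. Let $\varphi \in C^2$ touch $u_\infty$ from above at $x_0 \in \overline{B^+_1}$ and put $\varphi_\varepsilon := \varphi + \varepsilon|x-x_0|^2$ to ensure strict touching. Uniform convergence produces local maxima $x_k$ of $u_k - \varphi_\varepsilon$ with $x_k \to x_0$. If $x_0 \in B^+_1$, then $x_k \in B^+_1$ for $k$ large and the subsolution inequality
$$|\nabla\varphi(x_k) + 2\varepsilon(x_k - x_0) + \vec q_k|^\gamma F\bigl(D^2\varphi(x_k) + 2\varepsilon I\bigr) \ge f_k(x_k)$$
passes to the limit in $k$ by joint continuity of $(p, q, M) \mapsto |p+q|^\gamma F(M)$; sending $\varepsilon \to 0$ then yields the interior inequality at $x_0$. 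If $x_0 \in \Upsilon$, I would use the further perturbation $\varphi_{\varepsilon,\delta} := \varphi_\varepsilon + \delta x_n$, which preserves touching from above at $x_0$ and makes interior maxima strictly unfavorable, so that the approximating maxima lie on $\Upsilon$ for $k$ large; the Neumann subsolution condition $\partial_n\varphi(x_k) + \delta \ge g_k(x_k)$ then passes to the limit in $k$ and in $\delta \to 0$.

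\textbf{Principal obstacle.} The one delicate point is the boundary case in the stability argument: one must guarantee that the approximating maxima lie on $\Upsilon$ rather than drifting into $B^+_1$, so that the pointwise Neumann condition used in the paper's definition of viscosity solution is indeed applicable. This is accomplished by the $+\delta x_n$ perturbation. The degeneracy $|\cdot|^\gamma$ introduces no further complication: since $(p, q, M) \mapsto |p+q|^\gamma F(M)$ is jointly continuous and the inequalities pass to the limit under pointwise convergence, the degenerate factor is handled automatically.
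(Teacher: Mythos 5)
Your compactness step is correct and is essentially the paper's own route: normalize so that the hypotheses of the equicontinuity estimate are met, invoke Lemma \ref{LLL2} (the paper supplements it with a small-slope H\"older estimate, but the point is the same: a modulus of continuity independent of $\vec q_k$), and conclude with Arzel\`a--Ascoli. One caveat: your constant $K$ involves $\sup_k\|g_k\|_{C^{\beta}(\Upsilon)}$ and you call it ``finite by hypothesis,'' but the corollary only assumes the $g_k$ are continuous and uniformly bounded; the uniform $C^{\beta}$ bound is an extra assumption (it is forced on you by the statement of Lemma \ref{LLL2}, and indeed in the application inside the Approximation Lemma only $\|g_k\|_{L^{\infty}}\to 0$ is available), so this should at least be flagged rather than asserted.

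The genuine gap is in the boundary case of your stability argument. The claim that the perturbation $+\delta x_n$ ``makes interior maxima strictly unfavorable, so that the approximating maxima lie on $\Upsilon$ for $k$ large'' is unjustified and in general false: uniform convergence only gives $x_k\to x_0$, and nothing prevents the maximum of $u_k-\varphi_\varepsilon-\delta x_n$ from being attained at interior points with small $x_n$ (a fixed linear penalization cannot beat a small interior bump of $u_k$; to exclude interior maxima one must use the equation, which is exactly the step you skip). If $x_k\in B^+_1$, you only get the PDE inequality at $x_k$, and passing to the limit yields the \emph{relaxed} condition at $x_0\in\Upsilon$: either $|\nabla\varphi(x_0)+\delta e_n+\vec q_\infty|^{\gamma}F(D^2\varphi(x_0)+2\varepsilon I)\ge f_\infty(x_0)$ or $\partial_n\varphi(x_0)+\delta\ge g_\infty(x_0)$ --- not the pointwise Neumann inequality demanded by the paper's (strong) definition of viscosity solution. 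To upgrade, the usual device is a second-order penalization, testing with $\varphi_\varepsilon+\delta x_n-Cx_n^2$ and letting $C\to\infty$ so that uniform ellipticity drives $F(D^2\varphi+2\varepsilon I-2Ce_n\otimes e_n)$ to $-\infty$; and here, contrary to your closing remark, the degenerate factor is \emph{not} harmless: the prefactor $|\nabla\varphi_{\varepsilon,\delta}(x_k)-2Cx^k_ne_n+\vec q_k|^{\gamma}$ may vanish or be arbitrarily small at the interior touching point, so the product need not drop below $f_k(x_k)$, and the small-gradient and large-$|\vec q_k|$ regimes have to be treated separately (this is the kind of care taken in Lemma \ref{Canc} and in \cite{BB}). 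As written, the Neumann condition for $u_\infty$ is not established.
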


To end the section, as in \cite{Silv1}, we use the following lemma.
\begin{lemma} \label{Canc}
Let $\vec q \in \mathbb{R}^n$, $\varphi \in C(\partial B_1 \cap \{x_n>0\})$, $g \in L^{\infty}(\Upsilon)$ and $u$ be a viscosity solution of
$$
\left\{
\begin{array}{rcl}
|\nabla u + \vec q|^{\gamma}F(D^2 u) &=& 0   \quad \,\,\,\, \mbox{in} \,\,\,\, B^+_{1}\\
u &=& \varphi \quad \,\,\, \mbox{in} \,\,\,\, \partial B_1 \cap \{x_n>0\}\\
\partial_n u  &=& g   \quad \,\,\,\, \mbox{in} \,\,\,\, \Upsilon
\end{array}
\right.
$$
Then $u$ is a viscosity solution of
$$
\left\{
\begin{array}{rcl}
F(D^2 u) &=& 0   \quad \,\,\,\, \mbox{in} \,\,\,\, B^+_{1}\\
u &=& \varphi \quad \,\,\, \mbox{in} \,\,\,\, \partial B_1 \cap \{x_n>0\}\\
\partial_n u  &=& g   \quad \,\,\,\, \mbox{in} \,\,\,\, \Upsilon
\end{array}
\right.
$$
\end{lemma}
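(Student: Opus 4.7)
The plan is to note that the Dirichlet data on $\partial B_1 \cap \{x_n > 0\}$ and the Neumann data on $\Upsilon$ are identical in the two problems, so the entire lemma reduces to showing that an interior viscosity solution of $|\nabla u + \vec q|^\gamma F(D^2 u) = 0$ on $B_1^+$ is also a viscosity solution of $F(D^2 u) = 0$ on $B_1^+$. I would handle this as a boundary-adapted version of the classical interior cancellation argument of Imbert and Silvestre \cite{Silv1}.

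For the subsolution direction, I would let $\psi \in C^2$ touch $u$ from above at an interior point $x_0 \in B_1^+$ and aim to deduce $F(D^2\psi(x_0)) \ge 0$ from the given inequality $|\nabla \psi(x_0) + \vec q|^\gamma F(D^2 \psi(x_0)) \ge 0$. If $\nabla \psi(x_0) + \vec q \ne 0$, dividing by the strictly positive prefactor settles that case at once. In the degenerate case $\nabla \psi(x_0) + \vec q = 0$, I would invoke the Imbert--Silvestre perturbation: first upgrade to strict touching by replacing $\psi$ with $\psi + |x-x_0|^4$; then for each small $e \in \mathbb{R}^n$ identify an argmax $x_e$ of $u(x) - \psi(x) - e \cdot (x - x_0)$ on a small ball around $x_0$ together with a constant $c_e \to 0$ such that $\psi(x) + e \cdot (x - x_0) + c_e$ touches $u$ from above at $x_e$. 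The original equation at $x_e$ then reads $|\nabla \psi(x_e) + e + \vec q|^\gamma F(D^2 \psi(x_e)) \ge 0$, and along a sequence of $e$'s where the prefactor does not vanish I would divide it out and pass to the limit $x_e \to x_0$ using continuity of $F$ and $D^2 \psi$ to obtain $F(D^2 \psi(x_0)) \ge 0$. The supersolution direction is entirely symmetric, and boundary-point testing is automatic since the Dirichlet and Neumann data coincide in the two formulations.

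The principal difficulty I expect is ensuring the existence of perturbations $e_k \to 0$ with $\nabla \psi(x_{e_k}) + e_k + \vec q \ne 0$, which is the crux of the perturbation step. This is resolved as in \cite{Silv1}: the candidate degenerate set $\{e : \nabla \psi(x_e) + e + \vec q = 0\}$ is meager near the origin because $e \mapsto \nabla \psi(x_e) + e + \vec q$ is a continuous perturbation of the identity, so almost every sufficiently small $e$ is admissible. The half-ball geometry contributes no further obstruction, since $x_0 \in B_1^+$ is interior and the contact points $x_e$ remain in $B_1^+$ for $|e|$ small enough that the argument goes through verbatim.
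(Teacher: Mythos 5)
Your reduction of the lemma to the interior cancellation of the degenerate factor is the same as the paper's (the Dirichlet and Neumann data are untouched, and boundary touching points on $\Upsilon$ are handled by the unchanged normal-derivative inequality). The paper disposes of the interior part in two lines: set $v=u+\vec q\cdot x$ to reduce to $\vec q=0$ and quote Lemma 6 of \cite{Silv1}. You instead try to reprove that lemma, and the crux of your argument has a genuine gap: the claim that the degenerate set $\{e:\nabla\psi(x_e)+e+\vec q=0\}$ is meager near $e=0$ because $e\mapsto\nabla\psi(x_e)+e+\vec q$ is a ``continuous perturbation of the identity.'' First, $x_e$ is an argmax that need not be unique, and no selection of it need depend continuously on $e$, so the map is not even well defined as a continuous map. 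Second, and more seriously, the correction $\nabla\psi(x_e)+\vec q$ is of the same order as $e$ and can cancel $e$ exactly for \emph{every} small $e$: take $u$ affine with $\nabla u\equiv-\vec q$ (a classical, hence viscosity, solution of $|\nabla u+\vec q|^{\gamma}F(D^2u)=0$) and the admissible test function $\psi(x)=u(x)+|x-x_0|^{4}$. The contact point of $\psi+e\cdot(x-x_0)$ (suitably translated) satisfies $4|x_e-x_0|^{2}(x_e-x_0)=-e$, whence $\nabla\psi(x_e)+e+\vec q=0$ for all small $e\neq0$: the ``bad'' set is a full neighborhood of the origin, not meager. In this example the desired conclusion is trivially true ($D^2\psi(x_0)=0$ and $F(0)=0$), so it is not a counterexample to the lemma, but it shows that your mechanism for producing admissible perturbations $e_k\to0$ with nonvanishing prefactor fails.

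The case in which all perturbed contact points are degenerate is precisely the hard part of Lemma 6 in \cite{Silv1}, and it is resolved there by a separate argument, not by a category or measure count. As written, your proof therefore leaves the essential step unproved. Either reproduce the full Imbert--Silvestre treatment of that case (now with the shift $\vec q$), or follow the paper: pass to $v=u+\vec q\cdot x$ (which only shifts the Neumann datum by the constant $q_n$ and leaves the interior equation in the form $|\nabla v|^{\gamma}F(D^2v)=0$), invoke Lemma 6 of \cite{Silv1} at interior touching points, and check boundary touching points on $\Upsilon$ directly as in the paper's proof.
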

\begin{proof}
We can reduce the problem to the case $\vec q=0$ since $v = u+ \vec q \cdot x$ solves $|\nabla v|^{\gamma}F(D^2 v)=0$. It is sufficient to prove the super-solution property since the sub-solution property is similar.  Suppose that $P(x)$ touches $u$ from below (resp. above) at a point $x_0 \in \overline{B^+_1}$.   If $x_0 \in B^+_1$, then by Lemma 6 in \cite{Silv1}, $F(D^2 P(x_0)) \ge 0 \,\, \textrm{(resp.} \,\, \le 0 \textrm{)}$. If $x_0 \in \Upsilon$, we know that $P$ satises also the same inequality for the normal derivatives in the point $x_0 \in \Upsilon$.
\end{proof}

\section{Proof of Main Result}\label{SC2}

In this section we give a proof for Theorem \ref{P1}. We assume that $\gamma >0$ and $f \in L^{\infty}(B^+_1)$, $g \in C^{\beta}(\Upsilon)$, $\varphi \in C(\partial B_1 \cap \{x_n>0\})$, and we want to show that any viscosity solution $u$ of 
\begin{equation} \label{Eq2}
\left\{
\begin{array}{rcl}
|\nabla u|^{\gamma}F(D^2u) &=& f \quad \mbox{in}  B^+_1\\
u &=& \varphi \quad \mbox{in} \partial B_1 \cap \{x_n>0\}\\
\partial_n u &=& g \quad \mbox{in} \,\, \Upsilon .  
\end{array}
\right.
\end{equation}
is in $C^{1,\alpha}(\overline{B_{1/2}}\cap \{x_n=0\})$, where 
\begin{equation}\label{Hol1}
	\alpha \colon= \min\left\{\alpha^{-}_0, \beta, \frac{1}{1+\gamma}\right\}
\end{equation}
where $\alpha_0$ is the optimal exponent of regularity theory for homogeneous equations
$$
	F(D^2 u)=0 \quad \textrm{in} \quad B^+_1
$$
with Neumann boundary condition $\partial_n u  =0$ in $\Upsilon$(see Theorem 6.1 in \cite{Silv1}), where the estimate indicated in \eqref{Hol1} should be read as
$$
    \left\{
   \begin{array}{rcl}
   \textrm{If} \quad \min\left\{\beta,\frac{1}{1+\gamma}\right\}<\alpha_0,& \mbox{then} & u \in C^{1,\min\{\beta,\frac{1}{1+\gamma}\}}\\
   \textrm{If} \quad \min\left\{\beta,\frac{1}{1+\gamma}\right\} \ge\alpha_0,  & \mbox{then} & u \in C^{1,\sigma}, \,\, \textrm{for any} \,\,\, 0 < \sigma<\alpha_0
   \end{array}
   \right.
   $$
   \begin{remark} \label{Reduc}
At this point, by a standard  argument as in \cite{BB}, one can combine the boundary $C^{1,\alpha}$ estimate with the optimal interior regularity obtained in \cite{ART2} to conclude that $u \in C^{1,\alpha}(\overline{B^+_{1/2}})$. Now going back to the original domain $\Omega$, we can assert that there exists an affine approximation for $u$ of order $1+\alpha$ at all points of $\partial \Omega \cap B_1$.  Again, by a standard argument as in \cite{BB}, we concluded that $u \in C^{1,\alpha}(\overline{\Omega \cap B_{1/2}})$.
\end{remark}

 \subsection{Reduction of the problem}

In this section, we first show that a simple re-scaling reduces the proof of the problem to the case that $\|u\|_{L^{\infty}(\overline{B^+_1})} \le 1$, $\|g\|_{C^{\beta}(\Upsilon)} \le \epsilon_0$ and $\|f\|_{L^{\infty}(\overline{B^+_1})} \le \epsilon_0$ for some small constant $\epsilon_0$ which will be chosen later. We then further reduce the proof to an improvement of flatness lemma.

\begin{proposition}
In order to prove Theorem \ref{P1}, it is enough to prove that
$$
	\|u\|_{C^{1,\alpha}(\overline{B^+_{1/2}})} \le C
$$
assuming $\|u\|_{L^{\infty}(\overline{B^+_1})} \le 1$, $\|g\|_{C^{\beta}(\Upsilon)} \le \epsilon_0$ and $\|f\|_{L^{\infty}(\overline{B^+_1})} \le \epsilon_0$ for some $\epsilon_0 >0$ depends on the ellipticity constants, dimension and $\gamma$.
\end{proposition}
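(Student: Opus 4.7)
The plan is a pure normalization argument: only a vertical dilation is needed, with no spatial rescaling. Given a viscosity solution $u$ of \eqref{Eq2} with generic data $(f,g)$, I would set $v(x) \defeq u(x)/M$ on $\overline{B^+_1}$ for a constant $M\geq 1$ to be fixed momentarily. A direct computation (using $\nabla v = M^{-1}\nabla u$ and $D^2 v = M^{-1}D^2 u$) shows that $v$ is a viscosity solution of
\begin{equation*}
|\nabla v|^{\gamma}\,F^{\ast}(D^{2}v)=\tilde f\ \text{in}\ B^+_1,\qquad \partial_n v = \tilde g\ \text{on}\ \Upsilon,
\end{equation*}
where $F^{\ast}(N)\defeq M^{-1}F(MN)$, $\tilde f\defeq M^{-(1+\gamma)}f$ and $\tilde g\defeq M^{-1}g$. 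The new operator $F^{\ast}$ is readily checked to be $(\lambda,\Lambda)$-elliptic with $F^{\ast}(0)=0$, so the dilated problem lies in exactly the same structural class as the original.

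Next I would choose
\begin{equation*}
M \;\defeq\; 1 + \|u\|_{L^{\infty}(\overline{B^+_1})} + \epsilon_{0}^{-\frac{1}{1+\gamma}}\,\|f\|_{L^{\infty}(\overline{B^+_1})}^{\frac{1}{1+\gamma}} + \epsilon_{0}^{-1}\,\|g\|_{C^{\beta}(\Upsilon)},
\end{equation*}
which is engineered to enforce $\|v\|_{L^\infty(\overline{B^+_1})}\leq 1$, $\|\tilde f\|_{L^\infty(\overline{B^+_1})}\leq\epsilon_0$ and $\|\tilde g\|_{C^\beta(\Upsilon)}\leq\epsilon_0$ simultaneously, so that $v$ meets all three normalized hypotheses of the proposition.

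Applying the assumed a priori bound to $v$ would yield $\|v\|_{C^{1,\alpha}(\overline{B^+_{1/2}})}\leq C$ for a universal constant $C$. Undoing the normalization via $u = Mv$ then gives $\|u\|_{C^{1,\alpha}(\overline{B^+_{1/2}})}\leq CM$, and inserting the explicit value of $M$, together with the elementary inequality $a^{\frac{1}{1+\gamma}}\leq 1+a$, would recover precisely the quantitative bound \eqref{flatq} after a harmless renaming of the constant.

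The only subtle point to flag up front, and arguably the sole non-mechanical ingredient of the argument, is that the hypothesis of the proposition must be read as a \emph{universal} $C^{1,\alpha}$ estimate, valid for every $(\lambda,\Lambda)$-elliptic operator $F^{\ast}$ with $F^{\ast}(0)=0$, and not merely for the specific $F$ appearing in the data; indeed the dilation replaces $F$ by $F^{\ast}$, so a bound that depended on the particular operator would not survive the rescaling. Once this mild universality is acknowledged, the reduction is immediate and all remaining effort can be concentrated on the normalized regime.
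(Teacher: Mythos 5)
Your proposal is correct and is essentially the paper's own argument: the paper sets $\tilde u=\eta u$ with $\eta^{-1}=\|u\|_{L^{\infty}(\overline{B^+_1})}+\epsilon_0^{-1}\left(\|g\|_{C^{\beta}(\Upsilon)}+\|f\|_{L^{\infty}(\overline{B^+_1})}\right)$, i.e.\ your $1/M$ up to the exact choice of constant, passes to the dilated operator $F_{\eta}(M)=\eta F(\eta^{-1}M)$ (your $F^{\ast}$) while noting it keeps the same ellipticity constants, and scales back. The only inessential difference is your sublinear term $\epsilon_0^{-1/(1+\gamma)}\|f\|^{1/(1+\gamma)}$ and the additive $1$ in $M$, which return a bound of the form $C\left(1+\|u\|_{C(\overline{B^+_1})}+\|g\|_{C^{\beta}(\Upsilon)}+\|f\|_{L^{\infty}(\overline{B^+_1})}\right)$ rather than the homogeneous right-hand side of \eqref{flatq}; the paper's linear choice of $\eta$ gives \eqref{flatq} directly, but this is a matter of bookkeeping, not of substance.
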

\begin{proof}
Given any function $u$ under the assumptions of Theorem \ref{P1}, we can take
$$
	\eta = \left[ \|u\|_{L^{\infty}(\overline{B^+_1})} + \epsilon^{-1}_0 \left( \|g\|_{C^{\beta}(\Upsilon)}+ \|f\|_{L^{\infty}(\overline{B^+_1})}\right) \right]^{-1}
	$$
and consider the scaled function $\tilde{u}(x) = \eta u(x)$ solving the equation
$$
		\left\{
\begin{array}{rcl}
 |\nabla \tilde{u}|^{\gamma}F_{\eta}(D^2 \tilde{u}) &=& \tilde{f},  \quad \mbox{in} \, B^+_1\\
\partial_n \tilde{u} &=& \tilde{g}  \quad \,\,\,\mbox{in} \,\, \Upsilon
\end{array}
\right.
$$
where $F_{\eta}(M) = \eta F(\eta^{-1} M)$, $\tilde{f}(x) = \eta^{1+\kappa}f(x)$ and $\tilde{g}(x) = \eta g(x)$.  Note that, the function $F_{\eta}(\cdot)$ has the same ellipticity constant as $F(M)$.  But now $\|\tilde{u}\|_{L^{\infty}(\overline{B^+_1})} \le 1$, $\|g\|_{C^{\beta}(\overline{B^+_1})} \le \epsilon_0$ and $\|\tilde{f}\|_{L^{\infty}(\overline{B^+_{1}})}\le \epsilon_0$. Therefore, if

$$
	\| \tilde{u} \|_{C^{1,\alpha}(\overline{B^{+}_{1/2}})} \le C,
$$
by scaling back to $u$, we get
$$
\|u\|_{C^{1,\alpha}(\overline{B^+_{1/2}})} \le C \left(\|u\|_{C(\overline{B^+_1})} + \|g\|_{C^{\beta}(\Upsilon)} + \|f\|_{L^{\infty}(\overline{B^+_1})}\right)
$$
\end{proof}
\subsection{A geometric tangential approach}

We proceed in a way similar to \cite{CC} and prove an approximation lemma with Neumann boundary condition first. Using this result we iterate the estimates of the theory to obtain $C^{1,\alpha}$-estimates   adapting Lemma 5.1 in \cite{ART2} to our case.

\begin{lemma}[Approximation Lemma]\label{PT1}
Let $\vec q \in \mathbb{R}^n$ be an arbitrary vector. Suppose that $\varphi \in C(\partial B_1 \cap \{x_n>0\})$, has $\rho=\rho(s)$ as modulus of continuity on $\partial B_1 \cap \{x_n>0\}$ and satisfies $\|\varphi\|_{L^{\infty}(\partial B_1 \cap \{x_n>0\})} \le K$, for some positive constant $K$. Then, given $\delta>0$, there exists $\epsilon_0>0$ depending only on $\delta, n,\lambda,\Lambda, \rho, K$ such that if 
$$
	\|f\|_{L^{\infty}(B^+_1)} \le \epsilon_0 \quad \textrm{and} \quad \|g\|_{L^{\infty}(\Upsilon)} \le \epsilon_0
$$
then any two viscosity solutions $v$ and $w$ of, respectively
$$
\left\{
\begin{array}{rcl}
|\nabla w + \vec q|^{\gamma}F(D^2 w) &=& f   \quad \,\, \mbox{in} \,\,\,\, B^+_{1}\\
w &=& \varphi \quad \,\, \mbox{on} \,\,\,\, \partial B_1 \cap \{x_n>0\}\\
\partial_n w &=& g   \quad \,\,\,\, \mbox{in} \,\,\,\, \Upsilon
\end{array}
\right.
 $$
 and
 $$ 
\left\{
\begin{array}{rcl}
F(D^2 h) &=& 0   \quad \,\, \mbox{in} \,\,\,\, B^+_{1}\\
h &=& \varphi \quad \,\, \mbox{on} \,\,\,\, \partial B_1 \cap \{x_n>0\}\\
\partial_n h &=& 0   \quad \,\,\,\, \mbox{in} \,\,\,\, \Upsilon
\end{array}
\right.
$$
satisfy
$$
	\|w-h\|_{L^{\infty}(B^+_1)} \le \delta.
$$
\end{lemma}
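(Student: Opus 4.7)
The plan is to argue by contradiction via the standard compactness/stability scheme that has become routine in this circle of problems. Suppose the statement fails: then for some $\delta_0>0$ one can produce sequences of vectors $\vec q_k\in\mathbb{R}^n$, boundary data $\varphi_k$ sharing the common modulus $\rho$ and bound $K$, right-hand sides $f_k\in L^{\infty}(B_1^+)$ with $\|f_k\|_{L^{\infty}}\to 0$, Neumann data $g_k$ with $\|g_k\|_{L^{\infty}(\Upsilon)}\to 0$, and associated viscosity solutions $w_k$ (of the degenerate problem with data $\vec q_k,\varphi_k,f_k,g_k$) and $h_k$ (of the homogeneous problem with data $\varphi_k$) such that $\|w_k-h_k\|_{L^{\infty}(B_1^+)}>\delta_0$.

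The first step is to obtain precompactness of both sequences. Lemma \ref{LLL2} supplies a modulus of continuity for the $w_k$ up to the flat boundary that is independent of $\vec q_k$, $f_k$, $g_k$ (as long as these stay bounded in the relevant norms, which they do); and the standard boundary $C^{1,\alpha}$-estimate for the homogeneous mixed problem cited in Theorem~6.1 of \cite{Silv1} does the same for the $h_k$. Combined with the uniform modulus $\rho$ on the Dirichlet portion of the boundary, Arzel\`a--Ascoli then provides subsequences with $w_k\to w_\infty$, $h_k\to h_\infty$ uniformly in $\overline{B_1^+}$ and $\varphi_k\to\varphi_\infty$ uniformly on $\partial B_1\cap\{x_n>0\}$.

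The second and most delicate step is to identify the equation satisfied by the limit $w_\infty$; here I would split into two cases depending on whether $(\vec q_k)$ stays bounded. If $\vec q_k\to\vec q_\infty\in\mathbb{R}^n$ along a subsequence, Corollary \ref{A1} upgrades the uniform limit to a viscosity solution of $|\nabla w_\infty+\vec q_\infty|^{\gamma}F(D^2 w_\infty)=0$ in $B_1^+$ with $\partial_n w_\infty=0$ on $\Upsilon$ and $w_\infty=\varphi_\infty$ on the upper sphere, and Lemma \ref{Canc} then converts this to $F(D^2 w_\infty)=0$. If instead $|\vec q_k|\to\infty$, one argues directly with the stability of viscosity inequalities: for any $C^2$ test function $P$ touching $w_\infty$ from above at an interior point $x_0$, the inequality $|\nabla P(x_0)+\vec q_k|^{\gamma}F(D^2 P(x_0))\ge f_k(x_0)$ holds at a perturbed touching point $x_{k}\to x_0$; dividing by $|\nabla P(x_0)+\vec q_k|^{\gamma}\sim|\vec q_k|^{\gamma}\to\infty$ and passing to the limit yields $F(D^2 P(x_0))\ge 0$, with the symmetric conclusion for supersolutions. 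Boundary touching is handled analogously, exactly as in the proof of Lemma~\ref{Canc}. In both subcases $w_\infty$ ends up solving the very same uniformly elliptic homogeneous mixed problem as $h_\infty$.

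The final step is the uniqueness of the mixed Dirichlet/Neumann problem for $F(D^2 u)=0$ in $B_1^+$ with prescribed $\varphi_\infty$ on the upper sphere and zero Neumann data on $\Upsilon$, which follows from the maximum principle for uniformly elliptic equations. This forces $w_\infty\equiv h_\infty$ and hence $\|w_k-h_k\|_{L^{\infty}}\to 0$, contradicting the assumption $\|w_k-h_k\|_{L^{\infty}}>\delta_0$. The main obstacle is the $|\vec q_k|\to\infty$ case, which requires careful handling of viscosity touching in the presence of a large parameter; once the equation is divided by $|\vec q_k|^{\gamma}$ and the factor $|\nabla P(x_0)+\vec q_k|^{\gamma}/|\vec q_k|^{\gamma}\to 1$ is controlled, stability of viscosity solutions closes the argument.
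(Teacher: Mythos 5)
Your proposal follows essentially the same route as the paper: a contradiction/compactness argument, uniform up-to-the-boundary continuity estimates for the $w_k$ (Lemma \ref{LLL2}) and for the $h_k$, Arzel\`a--Ascoli, identification of the limit equation by splitting into the cases $\vec q_k$ bounded (Corollary \ref{A1} plus Lemma \ref{Canc}) and $|\vec q_k|\to\infty$ (dividing out $|\vec q_k|^{\gamma}$, which is exactly the paper's computation, only phrased at the level of test functions rather than of the rescaled equation), and finally uniqueness of the mixed Dirichlet/Neumann problem to reach the contradiction. One concrete point you should add: the lemma asserts that $\epsilon_0$ depends only on $\delta,n,\lambda,\Lambda,\rho,K$, i.e.\ it is uniform over all operators with the given ellipticity constants, and this uniformity is essential later, since Lemma \ref{L2} applies the approximation lemma to the rescaled operators $F_j(M)=\rho_0^{j(1-\alpha)}F\bigl(\rho_0^{-j(1-\alpha)}M\bigr)$, which change from step to step. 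Your contradiction sequence keeps $F$ fixed, so as written it only produces an $\epsilon_0$ that may depend on $F$; the paper instead lets the operators $F_k$ vary along the sequence and extracts, via Arzel\`a--Ascoli (the $F_k$ are uniformly Lipschitz with $F_k(0)=0$), a limiting $(\lambda,\Lambda)$-elliptic operator $\mathfrak{F}_\infty$, so that both $w_\infty$ and $h_\infty$ solve the same limiting homogeneous problem. This is an easy patch, but without it the claimed dependence of $\epsilon_0$ (and hence the iteration scheme) is not justified.
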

\begin{proof}
We argue by contradiction. Suppose that the lemma is not true. Then there exist $\delta_0>0$ and a sequence of operators $F_k$ and functions $h_k,w_k \in C(\partial B_1 \cap \{x_n>0\} ), \, g_k \in C^{\beta}(\Upsilon)$, $\vec q_k$ and $f_k$ (which satisfy the hypothesis for $F$, $\varphi$, $g$ and $f$, respectively, in the lemma) for which there are viscosity solutions $w_k$ and $h_k$ of
$$
\left\{
\begin{array}{rcl}
|\nabla w_k + \vec q_k|^{\gamma}F(D^2 w_k) &=& f _k  \quad \,\, \mbox{in} \,\,\,\, B^+_{1}\\
w_k &=& \varphi_k \quad \,\, \mbox{on} \,\,\,\, \partial B_1 \cap \{x_n>0\}\\
\partial_n w_k &=& g_k   \quad \,\,\,\, \mbox{in} \,\,\,\, \Upsilon
\end{array}
\right.
 $$
 and
 $$ 
\left\{
\begin{array}{rcl}
F(D^2 h_k) &=& 0   \quad \,\, \mbox{in} \,\,\,\, B^+_{1}\\
h_k &=& \varphi_k \quad \,\, \mbox{on} \,\,\,\, \partial B_1 \cap \{x_n>0\}\\
\partial_n h_k &=& 0   \quad \,\,\,\, \mbox{in} \,\,\,\, \Upsilon
\end{array}
\right.
$$
such that 
\begin{equation}\label{A1}
	\|f_k\|_{L^{\infty}(B^+_1)} \to 0, \,\, \|g_k\|_{L^{\infty}(\Upsilon)} \to 0 \quad \textrm{as} \,\,\, k \to \infty
\end{equation}
and
\begin{equation}\label{AA1}
	\|w_k-h_k\|_{L^{\infty}(B^+_1)} > \delta_0 \quad \forall \,\, k.
\end{equation}

Since all $\varphi_k$'s have the same modulus of continuity $\rho$ on $\partial B_1 \cap \{x_n>0\}$, $\|\varphi_k\|_{L^{\infty}(\partial B_1 \cap \{x_n>0\})} \le K$ for any $k$, we may suppose that $\varphi_k \to \varphi_{\infty}$ uniformly in $\partial B_1 \cap \{x_n>0\}$ and apply Lemma 9.1 in \cite{Silv}, conclude that $\{h_k\}$ is equicontinuous (and uniformly bounded) sequence of functions in $\overline{B^+_1}$.  Therefore, taking subsequence, we may assume that $h_k \to h_{\infty}$ \ uniformly in $\overline{B^+_1}$ and $h_{\infty}=\varphi_{\infty}$ in $\partial B_1 \cap \{x_n>0\}$. We may also assume (again by the Arzela-Ascoli theorem) that $F_{k}(\cdot) \to \mathfrak{F}_{\infty}(\cdot)$ as $k \to \infty$ uniformly in compact of $\textrm{Sym}(n)$ (the space of symmetric matrices), for some uniformly elliptic operator $\mathfrak{F}_{\infty}$. Thus, $h_{\infty} \in C(\overline{B^+_1})$ is a viscosity solution of 
 $$ 
\left\{
\begin{array}{rcl}
\mathfrak{F}_{\infty}(D^2 h_{\infty}) &=& 0   \quad \,\, \mbox{in} \,\,\,\, B^+_{1}\\
h_{\infty} &=& \varphi_{\infty} \quad \,\, \mbox{in} \,\,\,\, \partial B_1 \cap \{x_n>0\}\\
\partial_n h_{\infty} &=& 0   \quad \,\,\,\, \mbox{in} \,\,\,\, \Upsilon
\end{array}
\right.
$$

Our goal is to show that the sequence $w_k$ is pre-compact in the $C^0(\overline{B^+_{1}})$-topology.  We can obtain a universally large constant $A_0>0$, such that, given a subsequence $\{\vec q_{k}\}$, where
$$
	|\vec q_{k}| \ge A_0, \quad \forall k \in \mathbb{N},
$$
then, by Lemma \ref{L2},$\{w_{k}\}_{j\in \mathbb{N}}$ is bounded in $C^{0,1}(\overline{B^+_{1}})$.
For the case where,
$$
	|\vec q_{k}| < A_0, \quad \forall k \in \mathbb{N},
$$
easily follows (see Lemma \ref{L1}) that the sequence $\{w_{k}\}_{k\in \mathbb{N}}$ is bounded in $C^{0,\beta_1}(\overline{B^+_{1}})$ for some $0< \beta_1 < 1$.  Therefore in general, the family $\{w_k\}_{j\in\mathbb{N}}$ is bounded in $C^{0,\beta_1}(\overline{B^+_{1}})$, which gives us the desired compactness. From the compactness previously established,  up to a subsequence, $w_k \to w_\infty$ locally uniformly in $\overline{B^+_{1}}$.  Therefore, taking subsequences, we may assume that
$$
	w_k \to w_{\infty} \quad \textrm{and} \quad h_k \to h_{\infty} \quad \textrm{uniformly in} \, \overline{B^+_1} \quad \textrm{as} \,\,\, k \to \infty,
$$
for some $w_{\infty},h_{\infty} \in C(\overline{B^+_1})$ such that
$$
	w_{\infty} = h_{\infty} =\varphi_{\infty}\quad \textrm{in} \quad  \partial B^+_1 \cap \{x_n>0\}.
$$
Our ultimate goal is to prove that the limiting function $w_\infty$ is a solution to  a constant coefficient, homogeneous, $(\lambda, \Lambda)$-uniform elliptic equation. For that we also divide our analysis in two cases.
\begin{itemize}
\item If $|\vec q_j|$ bounded, we can extract a subsequence of $\{\vec q_j\}$, that converges to some $\vec q_\infty\in \mathbb{R}^n$. Finally, by Corollary \ref{A1}, we conclude
$$
\left\{
\begin{array}{rcl}
|\nabla w_{\infty} + \vec q_\infty|^{\kappa} \mathfrak{F}_\infty(D^2w_\infty) &=& 0,  \quad \mbox{in} \, B^+_{1},\\
w_{\infty} &=& \varphi_{\infty} \quad \mbox{in} \quad \partial B_{1} \cap \{x_n>0\}\\
\partial_n w_{\infty}&=& 0  \quad \mbox{in} \,\, \Upsilon
\end{array}
\right.
$$
For some $\mathfrak{F}_\infty$ $(\lambda,\Lambda)$-elliptic. So, by Lemma \ref{Canc}, we conclude that $w_\infty$ also satisfies
$$
\left\{
\begin{array}{rcl}
 \mathfrak{F}_\infty(D^2w_\infty) &=& 0,  \quad \mbox{in} \, B^+_{1},\\
 w_{\infty} &=& \varphi_{\infty} \quad \mbox{in} \quad \partial B_{1} \cap \{x_n>0\}\\
\partial_n w_{\infty} &=& 0  \quad \mbox{in} \,\, \Upsilon
\end{array}
\right.
$$
in the viscosity sense.

\item If $|\vec q_j|$ unbounded, then without loss of generality $|\vec q_j| \to \infty$. In this case, define $\vec{e}_k = \vec q_k/|\vec q_k|$ and so $w_k$ satisfies
$$
\left\{
\begin{array}{rcl}
\left| \vec{e}_{j}+\frac{\nabla w_{k}}{|\vec q_{j}|}\right| F_{k}(D^2w_{k})&=&\frac{f_{k}}{|\vec q_{k}|^\gamma} \quad \mbox{in} \, B^+_{1},\\
w_{k} &=& \varphi_k \quad \mbox{in} \quad \partial B_{1} \cap \{x_n>0\}\\
 \partial_n w_k&=& g_k \quad \quad \mbox{in} \,\, \Upsilon.
\end{array}
\right.
$$
We get at the limit
$$
\left\{
\begin{array}{rcl}
 |\vec{e}_{\infty} + 0\cdot \nabla w_{\infty}|\mathfrak{F}_{\infty}(D^2w_{\infty})&=& 0 \quad \quad \mbox{in} \, B^+_{1},\\
 w_{\infty} &=& \varphi_{\infty} \quad \quad \mbox{in} \quad \partial B_{1} \cap \{x_n>0\}\\
 \partial_n w_{\infty} &=& 0  \quad \quad \mbox{in} \,\, \Upsilon
\end{array}
\right.
$$
for $|\vec{e}_{\infty}|=1$. So, $w_\infty$ satisfies
\begin{equation} \label{AA0}
\left\{
\begin{array}{rcl}
 \mathfrak{F}_{\infty}(D^2w_{\infty})&=& 0 \quad \quad \mbox{in} \, B^+_{1},\\
 w_{\infty} &=& \varphi_{\infty}  \quad \mbox{in} \quad \partial B_{1} \cap \{x_n>0\}\\
\partial_n w_{\infty}&=& 0  \quad \quad \mbox{in} \,\, \Upsilon
\end{array}
\right.
\end{equation}
in the viscosity sense. Since \eqref{AA0} is uniquely solvable we get $v_{\infty} = w_{\infty}$ in $\overline{B^+_1}$, which also yields a contradiction to $\eqref{AA1}$
\end{itemize}

\end{proof}

\subsection{Universal flatness improvement}

In this Section, we deliver the core sharp oscillation decay that will ultimately imply the optimal $C^{1,\alpha}$ regularity estimate for solutions to Eq. \eqref{E1}. The first task is a step-one discrete version of the aimed optimal regularity estimate. A quick inference on the structure of equation \eqref{E1} reveals that no universal regularity theory for such equation could go beyond $C^{1,\alpha_0}$,  where $\alpha_0$ denote the optimal H\"{o}lder continuity exponent for solutions constant coefficients, homogeneous, elliptic equation 
$$
	F(D^2 h)=0 \quad \textrm{in} \quad B^+_1
$$
with $\nabla h \cdot \nu = 0$ on $\Upsilon$.  In fact the degeneracy term forces solutions to be less regular than solutions to the uniformly elliptic problem near its set of critical points. In this present section we show that a viscosity solution, $u$, to \eqref{E1}
is pointwise differentiable and its gradient, $\nabla u$, is locally of class $C^{0,\min\{\alpha^-_0, \beta,\frac{1}{1+\gamma}\}}$, which is precisely the optimal regularity for degenerate equations of the type \eqref{E1}.  This is the contents of next Lemma.
\begin{lemma} \label{L1}
Let $\vec q \in \mathbb{R}^n$ be an arbitrary vector and suppose that $\varphi \in C(\partial B_1 \cap \{x_n>0\})$. Let $w$ a normalized, i.e., $\|w\|_{L^{\infty}(\overline{B^+_1})} \le 1$, viscosity solution to
$$
 \left\{
\begin{array}{rcl}
 |\nabla w + \vec q|^{\gamma} F(D^2w) &=& f  \,\, \quad \mbox{in} \, B^+_1\\
 w &=& \varphi \quad \mbox{on} \,\, \partial B_1 \cap \{x_n>0\}\\
 \partial_n w &=& g  \,\, \quad \mbox{in} \,\, \Upsilon
\end{array}
\right.
 $$
 Given $\alpha \in (0,\alpha_0) \cap (0,\frac{1}{1+\gamma} ]$, there exists $0 < \rho_0 < 1/2$ and $\epsilon_0 >0$, depending only upon $n,\lambda,\Lambda, \gamma$ and $\alpha$, such that if
 $$
 	\|f\|_{L^{\infty}(B^+_1)} + \|g\|_{C^{\beta}(\Upsilon)} \le \epsilon_0,
 $$
 then there exists an affine function $\ell(X) = a + \vec b \cdot X$ such that
  \begin{eqnarray*}
  \|w - \ell\|_{L^{\infty}(B^+_{\rho_0})}  &\le& \rho^{1+\alpha}_0\\
	\langle \vec b , \vec e_n \rangle   &=& 0.
 \end{eqnarray*}
 Furthermore,
 $$
 	|a| + |\vec b| \le C(n,\lambda,\Lambda).
 $$
\end{lemma}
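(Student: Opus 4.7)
\textbf{Proof strategy for Lemma \ref{L1}.} The plan is to execute a tangential approximation argument: compare $w$ with a solution $h$ of the homogeneous constant-coefficient Neumann problem, extract an affine approximation of $h$ from the boundary $C^{1,\alpha_0}$ regularity of \cite{Silv}, and then transfer that approximation back to $w$ via the triangle inequality.

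Let $C_0 = C_0(n,\lambda,\Lambda)$ denote the universal boundary $C^{1,\alpha_0}$ estimate constant coming from Theorem 6.1 of \cite{Silv}. Since $\alpha < \alpha_0$, I would first choose $\rho_0 \in (0,1/2)$ small enough that
$$
C_0\,\rho_0^{\,\alpha_0-\alpha} \le \tfrac{1}{2},
$$
which is equivalent to $C_0\,\rho_0^{1+\alpha_0} \le \tfrac{1}{2}\,\rho_0^{1+\alpha}$. With $\rho_0$ frozen, set $\delta \defeq \tfrac{1}{2}\rho_0^{1+\alpha}$ and let $\epsilon_0 = \epsilon_0(n,\lambda,\Lambda,\gamma,\alpha)$ be the corresponding approximation threshold produced by Lemma \ref{PT1}.

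Under the normalization $\|f\|_{L^{\infty}(B^+_1)} + \|g\|_{C^{\beta}(\Upsilon)} \le \epsilon_0$, let $h$ be the unique viscosity solution of
$$
\left\{
\begin{array}{rcl}
F(D^2 h) &=& 0 \quad \textrm{in}\,\,\, B^+_1,\\
h &=& w \quad \textrm{on}\,\,\, \partial B_1 \cap \{x_n>0\},\\
\partial_n h &=& 0 \quad \textrm{on}\,\,\, \Upsilon,
\end{array}
\right.
$$
where the boundary modulus of continuity of $w|_{\partial B_1\cap\{x_n>0\}}$ is universal thanks to Lemma \ref{LLL2}. Lemma \ref{PT1} then yields $\|w-h\|_{L^{\infty}(B^+_1)} \le \delta$, so $\|h\|_{L^{\infty}} \le 1+\delta \le 2$. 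The boundary $C^{1,\alpha_0}$ theory furnishes the affine function $\ell(X) \defeq h(0)+\nabla h(0)\cdot X =: a + \vec b \cdot X$ with
$$
\|h-\ell\|_{L^{\infty}(B^+_{\rho_0})} \le C_0\,\rho_0^{1+\alpha_0}, \qquad |a|+|\vec b| \le C(n,\lambda,\Lambda).
$$
The homogeneous Neumann condition at $0\in\Upsilon$ forces $\partial_n h(0) = \langle \vec b,\vec e_n\rangle = 0$, which is exactly the required orthogonality. The triangle inequality closes the argument:
$$
\|w-\ell\|_{L^{\infty}(B^+_{\rho_0})} \le \|w-h\|_{L^{\infty}(B^+_1)} + \|h-\ell\|_{L^{\infty}(B^+_{\rho_0})} \le \delta + C_0\,\rho_0^{1+\alpha_0} \le \rho_0^{1+\alpha}.
$$

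The main delicate point is that the threshold $\epsilon_0$ and the radius $\rho_0$ must be uniform in $\vec q\in\mathbb{R}^n$ for the estimate to be iterable in the next subsection. This uniformity is precisely what the proof of Lemma \ref{PT1} arranges: the regimes $|\vec q_k|$ bounded and $|\vec q_k|\to\infty$ are each treated separately, and Lemma \ref{Canc} produces a homogeneous $F$-harmonic limit in both cases, so the same $F(D^2 h)=0$ comparison problem governs the approximation regardless of $\vec q$. Consequently $\rho_0$ and $\epsilon_0$ depend only on $n,\lambda,\Lambda,\gamma,\alpha$, as required.
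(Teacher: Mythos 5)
Your proposal is correct and follows essentially the same route as the paper's own proof: approximate $w$ by a solution $h$ of the homogeneous constant-coefficient Neumann problem via Lemma \ref{PT1}, invoke the boundary $C^{1,\alpha_0}$ theory of \cite{Silv} to obtain the affine function $\ell(X)=h(0)+\nabla h(0)\cdot X$ with $\langle \vec b,\vec e_n\rangle=0$ from the Neumann condition, and close with the triangle inequality after choosing $\rho_0$ and $\delta=\tfrac12\rho_0^{1+\alpha}$ exactly as in the paper's selection \eqref{E4}--\eqref{E5}. Your additional remark on the $\vec q$-uniformity of $\epsilon_0$ and $\rho_0$ matches the role the compactness analysis plays in the paper's proof of Lemma \ref{PT1}.
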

\begin{proof}
For a $\delta>0$ to be chosen a posteriori, let h be a solution to a constant coefficient, homogeneous, $(\lambda,\Lambda)$-uniform elliptic equation that is $\delta$-close to $v$ in $L^{\infty}(\overline{B^+_{1}})$. The existence of such a function is the thesis of Lemma \ref{PT1}, provided $\epsilon_0$ is chosen small enough, depending only on $\delta$ and universal parameters. Since our choice for $\delta$ later in the proof will depend only upon universal parameters, we will conclude that the choice of $\epsilon_0$ is also universal. 
From normalization of $v$, it follows that $\|h\|_{L^{\infty}(\overline{B^+_{1}})}\le 2$. Therefore, from the regularity theory available for $w$, see for instance \cite{Silv}, Theorem 9.3 , we can estimate
\begin{eqnarray}
	 \|h(X) - \left(\nabla h(0) \cdot X + h(0)\right)\|_{L^{\infty}(B^+_{1/2})} &\le& C(n,\lambda,\Lambda,, \alpha) \cdot r^{1+\alpha_0} \label{E2}\\
	|\nabla h(0)| + |h(0)| &\le& C(n,\lambda,\Lambda,\alpha_0) \nonumber 
\end{eqnarray}
and by boundary condition  $\langle \nabla h(0) , \vec e_n \rangle= 0$. Remark that, by Theorem 6.1 in \cite{Silv}, $\nabla h$ is well defined up to the boundary $\Upsilon$. Let us label
$$
	\ell(X) = \nabla h(0) \cdot X + h(0).
$$
It readily follows from triangular inequality that
\begin{equation} \label{E3}
\|w(x) - \ell(x)\|_{L^{\infty}(B^+_{\rho_0})} \le \delta + C(n,\lambda,\Lambda) \cdot \rho^{1+\alpha_0}_0
\end{equation}
Now, fixed an exponent $\alpha < \alpha_0$, we select $\rho_0$ and $\delta$ as
\begin{eqnarray}
	\rho_0 &=& \sqrt[\alpha_0- \alpha]{\frac{1}{2C(n,\lambda,\Lambda)}} \label{E4}\\
	\delta &=& \frac{1}{2} \left(\frac{1}{2 C(n,\lambda,\Lambda)}\right)^{\frac{1+\alpha}{\alpha_0-\alpha}} \label{E5}
\end{eqnarray}
where $C$ is the universal constant appearing in \eqref{E2}. We highlight that the above choices depend only upon $n,\lambda,\Lambda$ and the fixed exponent $0 < \alpha < \alpha_0$. Finally, combining \eqref{E2}, \eqref{E3}, \eqref{E4} and \eqref{E5}, we obtain
\begin{eqnarray*}
	 \|w(X)-\ell(X)\|_{L^{\infty}(B^+_{\rho_0})} &\le&  \frac{1}{2} \left(\frac{1}{2 C(n,\lambda,\Lambda)}\right)^{\frac{1+\alpha_0}{\alpha-\alpha_0}} + C(n,\lambda,\Lambda) \cdot \rho^{1+\alpha}_0 \\
	&=& \frac{1}{2} \rho^{1+\alpha}_0 + \frac{1}{2} \rho^{1+\alpha}_0 = \rho^{1+\alpha}_0,
\end{eqnarray*}
and the Lemma is proven.
 \end{proof}
Our next step involves iterating Lemma \ref{L1} in the appropriate geometric scaling.
\begin{lemma} \label{L2}
Consider $\varphi \in C(\partial B_1 \cap \{x_n>0\})$, $ g \in C^{\beta}(\Upsilon)$ for some $\beta \in (0,1)$, $f \in L^{\infty}(B^+_1)$ . Let $u$ be a viscosity solution to
$$
 \left\{
\begin{array}{rcl}
 |\nabla u|^{\gamma} F(D^2u) &=& f  \quad \mbox{in} \, B^+_1 \\
  u&=& \varphi \quad \mbox{on} \,\, \partial B_1 \cap \{x_n>0\}\\
 \nabla u \cdot \nu  &=& g  \quad \mbox{in} \,\, \Upsilon,
\end{array}
\right.
 $$
with $\|u\|_{L^{\infty}(\overline{B^+_1})}\le 1$ and $\alpha = \min\left\{\alpha_0, \beta, \frac{1}{1+\gamma}\right\} $. Then, there exists $0<\rho_0 < 1/2$ and $\epsilon_0 \in [0,1]$ only depending on $\lambda, \Lambda$, $n$ and $\gamma$ such that, if
$$
	\|f\|_{L^{\infty}(B^+_1)} + \|g\|_{C^{\beta}(\Upsilon)} \le \epsilon_0
$$
 then for all $j \in \mathbb{N}$, there exists a sequence of affine functions $\ell_j(x) = a_j + \vec b_j \cdot x$ satisfying
\begin{eqnarray}
	|a_{j+1}-a_{k}| + \rho^j_0 |\vec b_{j+1}-\vec b_j| &\le& C_0 \rho^{(1+\alpha)j}_{0} \label{6.8}\\
	\langle \vec b_j , \vec e_n \rangle& =&g(0), \label{6.9}
\end{eqnarray}
such that
\begin{equation} \label{E6}
	 \|u(x) - \ell_j(x) \|_{L^{\infty}(B^+_{\rho^j_0})} \le \rho^{j(1+\alpha)}_0.
\end{equation}
\end{lemma}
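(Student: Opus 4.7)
The natural approach is induction on $j$, using Lemma \ref{L1} as the one-step flatness improvement at each dyadic scale, in the spirit of \cite{ART2}. I first reduce to $g(0)=0$ by replacing $u$ with $\bar u(x) \defeq u(x) - g(0)\,x_n$: then $\bar u$ solves a problem of the same structure (with an extra constant vector $g(0)\vec e_n$ inside $F$'s gradient term) whose Neumann datum $\bar g \defeq g - g(0)$ satisfies $\bar g(0)=0$ and $\|\bar g\|_{C^\beta} \le 2\|g\|_{C^\beta}$. In the reduced problem the required condition becomes $\langle \vec b_j,\vec e_n\rangle = 0$, and the original $\vec b_j$ of the lemma is recovered at the end by adding $g(0)\vec e_n$. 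The base case $j=0$ is taken with $\ell_0\equiv 0$, which makes \eqref{6.8}, \eqref{6.9}, \eqref{E6} hold trivially.

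For the inductive step, assuming $\ell_j$ satisfies the three conclusions, I would introduce the rescaling
\[
v_j(x) \defeq \frac{u(\rho_0^{j} x) - \ell_j(\rho_0^{j} x)}{\rho_0^{j(1+\alpha)}}, \qquad x \in B^+_1,
\]
which is normalized by \eqref{E6}. A direct substitution shows that $v_j$ is a viscosity solution of
\[
|\nabla v_j + \vec q_j|^{\gamma}\, G_j(D^2 v_j) = \tilde f_j(x) \quad \text{in } B^+_1, \qquad \partial_n v_j = \tilde g_j \quad \text{on } \Upsilon,
\]
where $\vec q_j \defeq \rho_0^{-j\alpha}\vec b_j$, $G_j(M) \defeq \rho_0^{j(1-\alpha)} F\bigl(\rho_0^{-j(1-\alpha)}M\bigr)$ has the same ellipticity constants as $F$ and vanishes at $0$, $\tilde f_j(x) \defeq \rho_0^{j(1-\alpha(1+\gamma))} f(\rho_0^{j} x)$, and $\tilde g_j(x) \defeq \rho_0^{-j\alpha}\bigl(g(\rho_0^{j} x) - g(0)\bigr)$. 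Because $\alpha \le \tfrac{1}{1+\gamma}$, the exponent $1-\alpha(1+\gamma)$ is nonnegative, hence $\|\tilde f_j\|_{L^\infty(B^+_1)} \le \|f\|_{L^\infty(B^+_1)} \le \epsilon_0$; because $\alpha \le \beta$, a rescaling of the Hölder seminorm gives $\|\tilde g_j\|_{C^\beta(\Upsilon)} \le \|g\|_{C^\beta(\Upsilon)} \le \epsilon_0$. Thus $v_j$ meets the hypotheses of Lemma \ref{L1} for the vector $\vec q_j$.

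Lemma \ref{L1} then yields an affine $\tilde\ell(x) = \tilde a + \vec{\tilde b}\cdot x$ with $\langle \vec{\tilde b},\vec e_n\rangle = 0$, $|\tilde a|+|\vec{\tilde b}| \le C(n,\lambda,\Lambda)$, and $\|v_j - \tilde\ell\|_{L^\infty(B^+_{\rho_0})} \le \rho_0^{1+\alpha}$. Unscaling, I set
\[
a_{j+1} \defeq a_j + \rho_0^{j(1+\alpha)}\tilde a, \qquad \vec b_{j+1} \defeq \vec b_j + \rho_0^{j\alpha}\vec{\tilde b},
\]
so that $\ell_{j+1}(\rho_0^{j+1} x) = \ell_j(\rho_0^{j+1} x) + \rho_0^{j(1+\alpha)}\tilde\ell(\rho_0 x)$. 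Then \eqref{6.8} follows from the universal bounds on $\tilde a,\vec{\tilde b}$; \eqref{6.9} follows from $\langle \vec{\tilde b},\vec e_n\rangle = 0$ combined with the inductive identity; and \eqref{E6} at step $j+1$ reads
\[
|u(\rho_0^{j+1} x) - \ell_{j+1}(\rho_0^{j+1} x)| = \rho_0^{j(1+\alpha)}\,|v_j(\rho_0 x) - \tilde\ell(\rho_0 x)| \le \rho_0^{(j+1)(1+\alpha)}.
\]

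The principal technical subtlety is that the gradient shift $\vec q_j = \rho_0^{-j\alpha}\vec b_j$ may be unbounded as $j\to\infty$, since $|\vec b_j|$ need not decay while $\rho_0^{-j\alpha}\to\infty$. This is precisely why Lemma \ref{L1}, and the underlying approximation Lemma \ref{PT1}, must be proven uniformly in $\vec q\in\mathbb{R}^n$, handling the bounded and unbounded regimes separately (the unbounded case reducing, in the limit, to the homogeneous equation via Lemma \ref{Canc}). Once that uniformity is granted, the iteration runs to all $j$ with universal constants depending only on $n,\lambda,\Lambda,\gamma$ and $\alpha$, and the three exponents in $\alpha = \min\{\alpha_0^-,\beta,\tfrac{1}{1+\gamma}\}$ play the complementary roles of controlling, respectively, the homogeneous flatness step, the Neumann datum, and the gradient-degeneracy rescaling.
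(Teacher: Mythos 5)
Your proof is correct and follows essentially the same route as the paper: induction on $j$, rescaling $v_j=(u-\ell_j)(\rho_0^{j}\cdot)/\rho_0^{j(1+\alpha)}$, checking smallness of the rescaled data through $\alpha\le\tfrac{1}{1+\gamma}$ and $\alpha\le\beta$, and invoking Lemma \ref{L1} uniformly in the vector $\vec q_j=\rho_0^{-j\alpha}\vec b_j$. Your upfront subtraction of $g(0)x_n$, which makes the rescaled Neumann datum $\rho_0^{-j\alpha}\bigl(g(\rho_0^{j}x)-g(0)\bigr)$, is in fact a small improvement in precision: the paper performs this normalization only later in the proof of Theorem \ref{P1} and writes $\tilde g=\rho_0^{-\alpha j}g(\rho_0^{j}\cdot)$, whose $C^{\beta}$ bound genuinely needs the $g(0)$ correction you include to be consistent with the induction hypothesis $\langle\vec b_j,\vec e_n\rangle=g(0)$.
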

\begin{proof}
 
We argue by finite induction. The case $k=1$ is precisely the statement of Lemma \ref{L1}. Suppose we have verified  \eqref{E6} for $k=1,2,\ldots, j$. Define the rescaled function $w : \overline{B^+_1}\rightarrow \mathbb{R}$
$$
	w(x) \colon= \frac{(u-\ell_j)(\rho^j_0x)}{\rho^{j(1+\alpha)}_0}
$$
It readily follows from the induction assumption that $\|w\|_{L^{\infty}(\overline{B^+_1})} \le 1$. Furthermore, $w$ satisfies
$$
 \left\{
\begin{array}{rcl}
 |\nabla w + \rho^{-\alpha j}_0 \vec b_j|^{\gamma} F_{j}(D^2 w) &=& \tilde{f}  \quad \mbox{in} \, B^+_{1}\\
  w &=& \tilde{\varphi} \quad \mbox{on} \,\, \partial B_1 \cap \{x_n>0\}\\
\partial_n w &=&\tilde{ g} \quad \mbox{in} \,\, \Upsilon.
\end{array}
\right.
 $$
 where
 \begin{eqnarray*}
 	\tilde{f}(x) &=& \rho^{j[1-\alpha(1+\gamma)]}f(\rho^j_0 X)\\
	\tilde{\varphi} &=& \rho^{-j(1+\alpha)}_{0} \left(\varphi(\rho^{j}_0 x) - \ell_{j}(\rho^{j}_{0} x)\right)\\
	\tilde{g}(x) &=& \rho^{-\alpha j}_0g(\rho^j_0 x)
 \end{eqnarray*}
 and
 $$
 	F_{j}(M) \colon= \rho^{j(1-\alpha)}_{0} F \left(\frac{1}{\rho^{j(1-\alpha)}_0} M\right)
 $$
  It is standard to verify that the operator $F_{j}$ is $(\lambda,\Lambda)$-elliptic. Note that, by induction hypothesis $\|\tilde{\varphi}\|_{L^{\infty}(\partial B_1 \cap \{x_n>0\})} \le 1$.  Also, one easily estimate
 \begin{eqnarray}
 	\|f_{j}\|_{L^{\infty}(\overline{B^+_1})}& \le& \rho^{j [1-\alpha(1+\gamma)]}_{0} \|f\|_{L^{\infty}(\overline{B_{\rho^k_0}})} \label{E7}\\
	\|\tilde{g}\|_{C^{\beta}(\Upsilon)}&\le& \rho^{{(\beta-\alpha)j}}_{0} \|g\|_{C^{\beta}(\Upsilon \cap B_{\rho^j_0})}. \label{E8}
 \end{eqnarray}
 Due to the sharpness of the exponent selection $\alpha = \min \left\{\alpha_0, \beta, \frac{1}{1+\gamma}\right\}$, namely $\alpha \le \frac{1}{1+\gamma}$ and $\alpha \le \beta$, we conclude $(F_j,f_j,g_j)$ satisfies the smallness assumption Lemma \ref{L1}. Thus, there exists a affine function $\tilde{\ell}(X) \colon= a + \vec b \cdot X$ with
 \begin{eqnarray}
 	|a| + |\vec b| &\le& C(n,\lambda,\Lambda) \label{E11-1}\\
	\nonumber\\
	\langle  \vec b , \vec e_n \rangle &=&0, \label{E11}
 \end{eqnarray}
 such that
 \begin{equation} \label{E10}
 		 \|w(X)-\tilde{\ell}(X)\|_{L^{\infty}(B^+_{\rho_0})} \le \rho^{1+\alpha}_0.
 \end{equation}
 In the sequel, we define the $(j+1)$th approximating affine function,
 $$
 	\ell_{j+1}(X) \colon= a_{j+1} + \vec b_{j+1} \cdot X,
 $$
 where the coefficients are given by
 $$
 	a_{j+1} \colon= a_k + \rho^{j(1+\alpha)}_0 a \quad \textrm{and} \quad \vec b_{j+1} \colon= \vec b_j + \rho^{\alpha j}_0 \vec b.
 $$
 By induction assumption and \eqref{E11}, $b_{j+1} \cdot \nu  =g(0)$. Re-scaling estimate \eqref{E10} back, we obtain
 $$
 	 \|u(x) - \ell_{j+1}(x)\|_{L^{\infty}(B^+_{\rho^{j+1}_0})} \le \rho^{(j+1)(1+\alpha)}_0
 $$
 and the proof of Lemma is complete.
\end{proof}

\subsection{Proof of Theorem \ref{P1}}

We now conclude the proof of Theorem \ref{P1}.  In view of our discussion in remark \ref{Reduc}, by optimal interior estimates (see \cite{ART2}) it is enough to find a $C^{1,\alpha}$ estimate for the points in $\overline{B_{1/2}} \cap \{x_n=0\}$.  Moreover, for proving \eqref{flatq} (flat boundary) it is enough to get a universal estimate at the origin, and then apply it to rescaling and translations of $u$ ( under the hypotheses
of Lemmas \ref{L1} and \ref{L2}). In fact, letting $v=u-g(0)x_n -u(0)$, we have that $v(0)=0$ and if $\ell_{\star}$ is the affine approximation of order $1+\alpha$ at $0$ for $v$, then $\ell_{\star} +g(0)x_n$ is the affine approximation for $u$ at $0$.   For a fixed exponent $\alpha$ satisfying the sharp condition, we will
establish the existence of an affine function
$$
	\ell_{\star}(X) \colon= a_{\star} + \vec b_{\star} \cdot x,
$$
such that
$$
	|a_{\star}| + |\vec b_{\star}| \le C,
$$
and
\begin{eqnarray*}
	\sup_{x \in B^+_r} |u(x) - \ell_{\star}(x)| &\le& C \cdot r^{1+\alpha}, \quad \forall \, r \ll 1,
\end{eqnarray*}
for a constant $C$ that depends only on $n,\lambda, \Lambda, \kappa$ and $\alpha$. Initially, we notice that it follows from \eqref{6.8} that the coefficients of the sequence of affine functions $\ell_k$ generated in Lemma \ref{L2}, namely $\vec b_j$ and $a_j$, are Cauchy sequences in $\mathbb{R}^n$ and in $\mathbb{R}$, respectively. Let
\begin{eqnarray}
	\vec b_{\star} &\colon=& \lim_{j \to \infty} \vec b_j \label{7.4}\\
	a_{\star} &\colon=& \lim_{j \to \infty} a_{j} \label{7.5}
\end{eqnarray}
It also follows from the estimate obtained in \eqref{6.8} that
\begin{eqnarray*}
	|a_{\star} - a_{j}| &\le& \frac{C_0}{1-\rho_0} \rho^{j(1+\alpha)}_0, \label{7.6}\\
	|\vec b_{\star} - \vec b_j| &\le& \frac{C_0}{1-\rho_0} \rho^{j \alpha}_0 \label{7.7}.
\end{eqnarray*}
Now, fixed a $0 < r < \rho_0$, we choose $j \in \mathbb{N}$ such that
$$
	\rho^{j+1}_0 < r \le \rho^{j}_0.
$$
We estimate
\begin{eqnarray*}
	\sup_{x \in B^+_r} |u(x) - \ell_{\star}(x)| &\le& \sup_{x \in B^+_{\rho^j_0}} |u(x) - \ell_{\star}(x)|\\
	&\le&  \sup_{x \in B^+_{\rho^j_0}} |u(x) - \ell_{j}(x)| +  \sup_{x \in B^+_{\rho^j_0}} |\ell_k(x) - \ell_{\star}(x)|\\
	&\le& \rho^{j(1+\alpha)}_0 + \frac{C_0}{1-\rho_0} \rho^{j(1+\alpha)}_0\\
	&\le& \frac{1}{\rho^{1+\alpha}_0} \left[1 + \frac{C_0}{1-\rho_0}\right] \cdot r^{1+\alpha},
\end{eqnarray*}
and the proof of Theorem is finally complete.

\section{Some consequences of the main result} \label{Four}

In this Section, we will present the some consequences of the main result. An important consequence is the following:
\begin{corollary}\label{Cor1}
Let $u$ be a viscosity solutions to 
$$
	|\nabla u|^{\gamma} F(D^2 u) =f \quad B^+_1
$$
with Neumann boundary condition $\nabla u \cdot \nu =g$ in $\Upsilon$. Assume $f \in L^{\infty}(B^+_1)$, $g \in C^{\beta}(\Upsilon)$ and $F$ is uniformly elliptic and concave. Then $u \in C^{1, \min\{\beta,\frac{1}{1+\gamma}\}}(\overline{B^+_{1/2}})$ and this regularity is optimal.  
\end{corollary}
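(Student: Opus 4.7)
The plan is to realize this corollary as a direct consequence of Theorem \ref{P1}. The main theorem yields $C^{1,\alpha}(\overline{B^+_{1/2}})$ regularity with exponent $\alpha = \min\{\alpha_0^-,\beta,\frac{1}{1+\gamma}\}$, where $\alpha_0$ is the optimal H\"older exponent for the gradient of solutions to the associated homogeneous equation $F(D^2 h)=0$ in $B^+_1$ with Neumann data $\partial_n h = 0$ on $\Upsilon$. To obtain the stated exponent $\min\{\beta,\frac{1}{1+\gamma}\}$, it is enough to check that $\alpha_0 \geq 1$ whenever $F$ is concave (in addition to being uniformly elliptic), since then $\alpha_0^- \geq 1 > \max\{\beta,\frac{1}{1+\gamma}\}$ and the minimum in the exponent collapses to $\min\{\beta,\frac{1}{1+\gamma}\}$.

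The improvement of $\alpha_0$ under concavity is precisely the content of Evans--Krylov theory. Solutions of the constant-coefficient equation $F(D^2 h)=0$ with concave $F$ enjoy interior $C^{2,\alpha}$ estimates. The corresponding estimate up to the flat piece $\Upsilon$ under the homogeneous Neumann condition $\partial_n h = 0$ follows from a standard even-reflection argument: extend $h$ to the full ball $B_1$ by setting $\tilde h(x',x_n) := h(x',|x_n|)$. Using $F(RMR)=F(M)$ with $R$ the reflection across $\{x_n=0\}$ (which holds because $F$ depends only on the eigenvalues of its argument), one verifies that $\tilde h$ is a viscosity solution of $F(D^2 \tilde h)=0$ in $B_1$, so interior Evans--Krylov yields $h \in C^{2,\alpha}(\overline{B^+_{1/2}})$. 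In particular $\nabla h$ is Lipschitz and one may take $\alpha_0 = 1$. Alternatively, one invokes the boundary $C^{2,\alpha}$ regularity for concave operators under oblique boundary conditions that is available in the literature cited in \cite{Silv}.

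With this upgrade in hand, Theorem \ref{P1} immediately gives the claimed $C^{1,\min\{\beta,\frac{1}{1+\gamma}\}}(\overline{B^+_{1/2}})$ regularity. For sharpness in the $\frac{1}{1+\gamma}$ slot, one uses the explicit model $u(x) = |x|^{1+\frac{1}{1+\gamma}}$ recalled in the introduction, which satisfies $|\nabla u|^\gamma \Delta u \in L^\infty$ with the Laplacian (trivially concave) and is no better than $C^{1,\frac{1}{1+\gamma}}$; since $\partial_n u(x',0) = 0$, this example also respects the homogeneous Neumann condition, so sharpness persists on the flat boundary. Sharpness in the $\beta$ slot is the classical observation that the regularity of $\partial_n u|_\Upsilon$ is inherited from $g$, so a datum $g$ lying precisely in $C^\beta$ (and in no finer class) prevents $\nabla u$ from being H\"older continuous with a better exponent.

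The main point of substance is the boundary $C^{2,\alpha}$ estimate for the homogeneous Neumann problem with concave $F$, which is what forces $\alpha_0 \geq 1$; everything else is direct bookkeeping with Theorem \ref{P1} and standard examples. Once that boundary Evans--Krylov step is granted, the corollary reads off immediately.
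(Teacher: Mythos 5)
Your proposal follows essentially the same route as the paper: the corollary is read off from Theorem \ref{P1} once one knows that concavity of $F$ upgrades the homogeneous constant-coefficient Neumann problem to $C^{1,1}$ (boundary Evans--Krylov type regularity), so that $\alpha_0=1$ and, since $\beta<1$ and $\frac{1}{1+\gamma}<1$, the exponent collapses to $\min\{\beta,\frac{1}{1+\gamma}\}$ --- which is exactly the paper's one-line argument citing the $C^{1,1}$ estimate for concave equations with Neumann data. One caveat: your even-reflection justification relies on $F(RMR)=F(M)$, i.e.\ orthogonal invariance of $F$, which is not implied by uniform ellipticity and concavity alone, so that route needs an extra hypothesis; however, your fallback of invoking the boundary $C^{2,\alpha}$ theory for concave operators with Neumann (oblique) boundary conditions --- precisely the reference the paper uses --- makes the key step sound.
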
 
\begin{proof}
Corollary follows from Theorem \ref{P1} since solutions to concave equations with Newmann boundary conditions are of class $C^{1,1}$ by \cite{Silv1}.
\end{proof}

Another interesting example to visit is the $\infty$-Laplacian operator 
$$
	\Delta_{\infty} v \colon= (\nabla v)^{t} \cdot D^2 v \cdot \nabla v.
$$
The theory of infinity-harmonic functions, i.e., $\Delta_{\infty} v=0$, has received great deal of attention. One of the main open problems in the modern theory of PDEs is whether infinity-harmomic functions are of class $C^1$. This conjecture has been answered positively by Savin \cite{Savin}  in the plane. Over here, we would like to mention that although the conjecture is open, nevertheless it is well known that that solutions to $\Delta_{\infty} v=0$ are locally of class $C^{1,\alpha}$ in the plane, for some exponent $\alpha$ depending only $n$ for instance, \cite{Evans} and  quite recently, Evans and Smart \cite{Evans2} proved that infinity-harmonic functions are everywhere differentiable regardless the dimension. We remember the famous example of the infinity-harmonic function $u(x,y)=x^{4/3}-y^{4/3}$ due to Aronsson from the late 1960s sets the ideal optimal regularity theory for such problem. In high dimensions, the situation is quite different. Very recently, the $C^{1,1/3}$ conjecture has been solved in the context for obstacle problems by Rossi, J. , Teixeira, E. and Urbano M. in \cite{TU}.  

Let $h \in C(B_1)$ be an infinity-harmonic function. For each $p \gg1$, let $h_p$ be the solution to the boundary value problem
$$
 \left\{
\begin{array}{rcl}
 \Delta_{p} h_p &=& 0  \quad \mbox{in} \, B_{3/4} \\
  h_p&=& h \quad \mbox{on} \,\, \partial B_{3/4} 
\end{array}
\right.,
 $$
 where 
 $$
 	\Delta_p v \colon= |\nabla v|^{p-2} \Delta v + (p-2)|\nabla v|^{p-4} \Delta_{\infty} v
 $$
  is the $p$-Laplacian operator.  It is known that $h_p \to h$  uniformly to $h$. In particular 
  $$
  	\Delta_{\infty} h_p = o(1), \quad \textrm{as} \,\,\,  p \to \infty.
  $$ 
 Hereafter, let us call $h_p$ the $p$-harmonic approximation of the infinity-harmonic function $h$. Our contribution in the context of the conjecture is the following

\begin{theorem}
Let $f \in L^{\infty}(B^+_1)$, $g \in C^{\beta}(\Upsilon)$ and $h \in C(\overline{B^+_1})$ satisfy
$$
 \left\{
\begin{array}{rcl}
 |\Delta_{\infty} h_p| &=& O(p^{-1}) \quad \textrm{as} \,\,\, p \to \infty  \quad \mbox{in} \, \,\,B^+_1 \\
  u&=& \varphi \quad \mbox{on} \,\, \partial B_1 \cap \{x_n>0\}\\
 \nabla u \cdot \nu  &=& g  \quad \mbox{in} \,\, \Upsilon,
\end{array}
\right.
 $$
 in the viscosity sense.  Assume $u$ is smooth up to a possible radial singularity. Then $u \in C^{1, \min\{\beta, \frac{1}{3}\}}(\overline{B^+_{1/2}})$.
\end{theorem}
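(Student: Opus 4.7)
The plan is to reduce this $\infty$-harmonic problem to an instance of Theorem \ref{P1} applied to the $p$-harmonic approximations $h_p$ with \emph{universal} (i.e.\ $p$-independent) estimates, and then pass to the limit $p \to \infty$. The core algebraic observation is that the $p$-Laplace equation
$$\Delta_p h_p \;=\; |\nabla h_p|^{p-2}\Delta h_p + (p-2)\,|\nabla h_p|^{p-4}\Delta_\infty h_p \;=\; 0$$
can be rewritten, away from the set $\{\nabla h_p = 0\}$, as
$$|\nabla h_p|^2 \Delta h_p \;=\; -(p-2)\,\Delta_\infty h_p.$$
Combined with the standing hypothesis $|\Delta_\infty h_p| = O(p^{-1})$, this yields a uniform-in-$p$ bound $\bigl\||\nabla h_p|^2 \Delta h_p\bigr\|_{L^\infty(B_1^+)} \le M$, so that $h_p$ satisfies
$$|\nabla h_p|^\gamma F(D^2 h_p) \;=\; f_p \quad \text{in } B_1^+$$
with $\gamma = 2$, $F(M) = \operatorname{tr}(M)$, and $\|f_p\|_{L^\infty(B_1^+)} \le M$ uniformly in $p$.

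Next I would equip $h_p$ with boundary data inherited from $u$: the Dirichlet trace $h_p = \varphi$ on $\partial B_1 \cap \{x_n > 0\}$ and a Neumann trace $\partial_n h_p = g_p$ on $\Upsilon$ with $g_p \to g$ in $C^\beta(\Upsilon)$. Since $F = \Delta$ is linear and homogeneous Neumann-Laplace solutions are $C^\infty$ up to the flat boundary, the exponent $\alpha_0$ in \eqref{Hol1} can be taken arbitrarily close to $1$, so that $\min\{\alpha_0^{-}, \beta, \tfrac{1}{1+\gamma}\} = \min\{\beta, \tfrac{1}{3}\}$. Invoking Theorem \ref{P1} on $h_p$ produces
$$\|h_p\|_{C^{1,\min\{\beta, 1/3\}}(\overline{B_{1/2}^+})} \;\le\; C\bigl(\|h_p\|_{L^\infty(\overline{B_1^+})} + \|g_p\|_{C^\beta(\Upsilon)} + M\bigr),$$
with $C$ depending only on dimension and ellipticity, independent of $p$.

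The passage to the limit is then routine: $h_p \to u$ uniformly on $\overline{B_1^+}$ by the standard theory of $\infty$-harmonic approximations, and a diagonal Arzel\`a--Ascoli argument upgrades this convergence to $C^1$, so the uniform $C^{1,\min\{\beta, 1/3\}}$ bound is inherited by $u$. The main technical obstacle lies in controlling the identity $|\nabla h_p|^2 \Delta h_p = -(p-2)\Delta_\infty h_p$ at points where $\nabla h_p$ degenerates, since both the division by $|\nabla h_p|^{p-4}$ and the pointwise bound on $\Delta h_p$ are formal there. This is precisely where the hypothesis that \emph{$u$ is smooth up to a possible radial singularity} is used: away from the (conjecturally isolated) radial critical set, $|\nabla h_p|$ is bounded below uniformly in $p$ and the above manipulations are rigorous, while on the radial singular set the equation can be read off directly from the one-dimensional profile, in the spirit of Aronsson's $x^{4/3} - y^{4/3}$. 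Making this dichotomy precise, and simultaneously arranging $g_p \to g$ in $C^\beta$ without destroying the $O(1/p)$ control of $\Delta_\infty h_p$, is the delicate step; everything else is a direct application of the machinery built in Section \ref{SC2}.
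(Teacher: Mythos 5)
Your proposal is correct and follows essentially the same route as the paper: rewrite the $p$-Laplace equation as $|\nabla h_p|^{2}\Delta h_p=(2-p)\Delta_\infty h_p$, use the hypothesis $|\Delta_\infty h_p|=O(p^{-1})$ to get a $p$-uniform $L^\infty$ right-hand side, apply the degenerate Neumann estimate with $\gamma=2$ (the paper invokes Corollary \ref{Cor1} for the concave operator $F=\operatorname{tr}$, which is equivalent to your use of Theorem \ref{P1} with $\alpha_0$ arbitrarily close to $1$), and pass the uniform $C^{1,\min\{\beta,1/3\}}$ bound to the limit. In fact you spell out the limit passage and the caveats at critical points of $\nabla h_p$, which the paper's two-line proof leaves implicit.
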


\begin{proof}
In fact, since $h_p$ is $p$-harmonic, it satisfies
$$
	|\nabla h_p|^{2} \Delta h_p = (2-p) \Delta_{\infty} h_p.
$$
From the Corollary \ref{Cor1}, we deduce $\|h_p\|_{C^{1,\min\{\beta,1/3\}}} \le C$.
\end{proof}

\section*{Acknowledgments}

\hspace{0.65cm} This work has been supported by Conselho Nacional de desenvolvimento cient\'{i}fico e Tecnol\'{o}gico  (CNPq-Brazil). GCR would like to thank Research Group on Partial Differential Equations, Dept. of Math.

\vspace{2 cm}
\section*{Bibliography}

\vspace{1cm}
\noindent  \textsc{Gleydson C. Ricarte} \hfill  \\
\hfill  Universidade Federal Cear\'a  \\
 \hfill Department of Mathematics \\
\hfill Fortaleza, CE-Brazil 60455-760\\
 \hfill \texttt{ricarte@ufc.br}

\end{document}